\theoremstyle{plain}
\newtheorem{thm}{Theorem}
\newtheorem{cor}[thm]{Corollary}
\newtheorem{lem}[thm]{Lemma}
\newtheorem{prop}[thm]{Proposition}
\theoremstyle{remark}
\def\hsymbu#1{\smash{\lower1.7ex\hbox{\huge$#1$}}}
\def\rmoveio#1#2{
\setlength{\unitlength}{#1}
\begin{picture}(50,30)
\put(5,0){\line(0,1){30}}

{\allinethickness{.8pt}
\put(10,15){\vector(1,0){13}}
\put(23,15){\vector(-1,0){13}}}

\qbezier(25,0)(25,20)(40,20)
\qbezier(40,20)(45,20)(45,15)
\qbezier(45,15)(45,10)(40,10)
\qbezier(40,10)(35,10)(31,14)
\qbezier(28,17)(25,25)(25,30)

\ifnum#2=2
\put(3,28){\path(0,0)(2,2)(4,0)}
\put(28,28){\path(0,0)(2,2)(4,0)}
\put(38,15){\makebox{${\Huge c_{1}}$}}
\fi

\end{picture}
}
\def\rmoveiio#1#2{
\setlength{\unitlength}{#1}
\begin{picture}(60,30)
\put(5,0){\line(0,1){30}}
\put(15,0){\line(0,1){30}}

{\allinethickness{.8pt}
\put(20,15){\vector(1,0){15}}
\put(35,15){\vector(-1,0){15}}}

\qbezier(40,0)(42,1)(47,3)
\qbezier(52,6)(68,15)(52,24)
\qbezier(47,27)(42,30)(40,30)

\qbezier(60,0)(20,15)(60,30)

\ifnum#2=2
\put(2,27){\path(0,0)(3,3)(6,0)}
\put(12,27){\path(0,0)(3,3)(6,0)}
\put(40,27){\path(0,0)(0,3)(3,3)}
\put(60,27){\path(0,0)(0,3)(-3,3)}
\put(47,16){\makebox{${\Huge c_{1}}$}}
\put(47,10){\makebox{${\Huge c_{2}}$}}
\fi

\ifnum#2=3
\put(2,2){\path(0,0)(3,-3)(6,0)}
\put(12,27){\path(0,0)(3,3)(6,0)}
\put(40,3){\path(0,0)(0,-3)(3,-3)}
\put(60,27){\path(0,0)(0,3)(-3,3)}
\put(47,16){\makebox{${\Huge c_{1}}$}}
\put(47,10){\makebox{${\Huge c_{2}}$}}
\fi

\end{picture}
}
\def\rmoveiiio#1#2{
\setlength{\unitlength}{#1}
\begin{picture}(75,30)
\put(0,0){\line(1,1){15}}
\qbezier(15,15)(20,20)(20,30)

\put(10,0){\line(-1,1){4}}
\qbezier(4,6)(-5,15)(5,25)
\put(5,25){\line(1,1){5}}

\qbezier(20,0)(20,10)(16,14)
\put(14,16){\line(-1,1){8}}
\put(4,26){\line(-1,1){4}}

{\allinethickness{.8pt}
\put(25,15){\vector(1,0){15}}
\put(40,15){\vector(-1,0){15}}}

\qbezier(50,0)(50,10)(55,15)
\put(55,15){\line(1,1){15}}

\put(60,0){\line(1,1){5}}
\qbezier(65,5)(75,15)(66,24)
\put(64,26){\line(-1,1){4}}

\put(70,0){\line(-1,1){4}}
\put(64,6){\line(-1,1){8}}
\qbezier(54,16)(50,20)(50,30)

\ifnum#2=2
\put(0,27){\path(0,0)(0,3)(3,3)}
\put(7,30){\path(0,0)(3,0)(3,-3)}
\put(17,27){\path(0,0)(3,3)(6,0)}

\put(10,23){\makebox{${\Huge c_{1}}$}}
\put(10,3){\makebox{${\Huge c_{2}}$}}
\put(20,13){\makebox{${\Huge c_{3}}$}}

\put(47,27){\path(0,0)(3,3)(6,0)}
\put(60,27){\path(0,0)(0,3)(3,3)}
\put(67,30){\path(0,0)(3,0)(3,-3)}

\put(70,23){\makebox{${\Huge c'_{2}}$}}
\put(70,3){\makebox{${\Huge c'_{1}}$}}
\put(60,13){\makebox{${\Huge c'_{3}}$}}
\fi

\end{picture}
}
\def\rmovevio#1#2{
\setlength{\unitlength}{#1}
\begin{picture}(50,30)
\put(5,0){\line(0,1){30}}

{\allinethickness{.8pt}
\put(10,15){\vector(1,0){15}}
\put(25,15){\vector(-1,0){15}}}

\qbezier(30,0)(30,20)(45,20)
\qbezier(45,20)(50,20)(50,15)
\qbezier(50,15)(50,10)(45,10)
\qbezier(45,10)(30,10)(30,30)

\put(34,15){\circle{5}}

\ifnum#2=2
\put(3,28){\path(0,0)(2,2)(4,0)}
\put(28,28){\path(0,0)(2,2)(4,0)}
\put(38,15){\makebox{${\Huge c_{1}}$}}
\fi

\end{picture}
}
\def\rmoveviio#1#2{
\setlength{\unitlength}{#1}
\begin{picture}(60,30)
\put(5,0){\line(0,1){30}}
\put(15,0){\line(0,1){30}}

{\allinethickness{.8pt}
\put(20,15){\vector(1,0){15}}
\put(35,15){\vector(-1,0){15}}}

\qbezier(40,0)(80,15)(40,30)
\qbezier(60,0)(20,15)(60,30)
\put(50,4){\circle{5}}
\put(50,25){\circle{5}}

\ifnum#2=2
\put(2,27){\path(0,0)(3,3)(6,0)}
\put(12,27){\path(0,0)(3,3)(6,0)}
\put(40,27){\path(0,0)(0,3)(3,3)}
\put(60,27){\path(0,0)(0,3)(-3,3)}
\put(47,15){\makebox{${\Huge c_{1}}$}}
\put(47,10){\makebox{${\Huge c_{2}}$}}
\fi

\ifnum#2=3
\put(2,2){\path(0,0)(3,-3)(6,0)}
\put(12,27){\path(0,0)(3,3)(6,0)}
\put(40,3){\path(0,0)(0,-3)(3,-3)}
\put(60,27){\path(0,0)(0,3)(-3,3)}
\put(47,15){\makebox{${\Huge c_{1}}$}}
\put(47,10){\makebox{${\Huge c_{2}}$}}
\fi

\end{picture}
}
\def\rmoveviiio#1#2{
\setlength{\unitlength}{#1}
\begin{picture}(70,30)
\put(0,0){\line(1,1){15}}
\qbezier(15,15)(20,20)(20,30)

\put(10,0){\line(-1,1){5}}
\qbezier(5,5)(-5,15)(5,25)
\put(5,25){\line(1,1){5}}

\qbezier(20,0)(20,10)(15,15)
\put(15,15){\line(-1,1){15}}

\put(5,5){\circle{5}}
\put(15,15){\circle{5}}
\put(5,25){\circle{5}}

{\allinethickness{.8pt}
\put(28,15){\vector(1,0){14}}
\put(42,15){\vector(-1,0){14}}}

\qbezier(50,0)(50,10)(55,15)
\put(55,15){\line(1,1){15}}

\put(60,0){\line(1,1){5}}
\qbezier(65,5)(75,15)(65,25)
\put(65,25){\line(-1,1){5}}

\put(70,0){\line(-1,1){15}}
\qbezier(55,15)(50,20)(50,30)

\put(65,5){\circle{5}}
\put(55,15){\circle{5}}
\put(65,25){\circle{5}}

\ifnum#2=2
\put(0,27){\path(0,0)(0,3)(3,3)}
\put(7,30){\path(0,0)(3,0)(3,-3)}
\put(17,27){\path(0,0)(3,3)(6,0)}

\put(20,13){\makebox{${\Huge c_{1}}$}}

\put(47,27){\path(0,0)(3,3)(6,0)}
\put(60,27){\path(0,0)(0,3)(3,3)}
\put(67,30){\path(0,0)(3,0)(3,-3)}

\put(60,13){\makebox{${\Huge c'_{1}}$}}
\fi

\end{picture}
}
\def\rmovevivo#1#2{
\setlength{\unitlength}{#1}
\begin{picture}(70,30)
\put(0,0){\line(1,1){15}}
\qbezier(15,15)(20,20)(20,30)

\put(10,0){\line(-1,1){5}}
\qbezier(5,5)(-5,15)(5,25)
\put(5,25){\line(1,1){5}}

\qbezier(20,0)(20,10)(16,14)
\put(14,16){\line(-1,1){14}}

\put(5,5){\circle{5}}
\put(5,25){\circle{5}}

{\allinethickness{.8pt}
\put(28,15){\vector(1,0){14}}
\put(42,15){\vector(-1,0){14}}}

\qbezier(50,0)(50,10)(55,15)
\put(55,15){\line(1,1){15}}

\put(60,0){\line(1,1){5}}
\qbezier(65,5)(75,15)(65,25)
\put(65,25){\line(-1,1){5}}

\put(70,0){\line(-1,1){14}}
\qbezier(54,16)(50,20)(50,30)

\put(65,5){\circle{5}}
\put(65,25){\circle{5}}

\ifnum#2=2
\put(0,27){\path(0,0)(0,3)(3,3)}
\put(7,30){\path(0,0)(3,0)(3,-3)}
\put(17,27){\path(0,0)(3,3)(6,0)}

\put(20,13){\makebox{${\Huge c_{1}}$}}

\put(47,27){\path(0,0)(3,3)(6,0)}
\put(60,27){\path(0,0)(0,3)(3,3)}
\put(67,30){\path(0,0)(3,0)(3,-3)}

\put(60,13){\makebox{${\Huge c'_{1}}$}}
\fi

\end{picture}
}
\def\alexnum#1{
\setlength{\unitlength}{#1}
\begin{picture}(90,20)

\put(5,0){\line(1,1){20}}
\put(25,0){\line(-1,1){9}}
\put(14,11){\line(-1,1){9}}

\put(5,0){\path(0,3)(0,0)(3,0)}
\put(25,0){\path(-3,0)(0,0)(0,3)}

\put(0,15){\makebox{{$i$}}}
\put(0,0){\makebox{{$i$}}}
\put(27,15){\makebox{{$i+1$}}}
\put(27,0){\makebox{{$i+1$}}}

\put(55,0){\line(1,1){9}}
\put(75,0){\line(-1,1){20}}
\put(66,11){\line(1,1){9}}

\put(55,0){\path(0,3)(0,0)(3,0)}
\put(75,0){\path(-3,0)(0,0)(0,3)}

\put(50,15){\makebox{{$i$}}}
\put(50,0){\makebox{{$i$}}}
\put(77,15){\makebox{{$i+1$}}}
\put(77,0){\makebox{{$i+1$}}}

\end{picture}
}
\def\alexnumv#1{
\setlength{\unitlength}{#1}
\begin{picture}(35,20)

\put(5,0){\line(1,1){20}}
\put(25,0){\line(-1,1){20}}
\put(15,10){\circle{4}}

\put(5,0){\path(0,3)(0,0)(3,0)}
\put(25,0){\path(-3,0)(0,0)(0,3)}

\put(0,15){\makebox{{$j$}}}
\put(0,0){\makebox{{$i$}}}
\put(27,15){\makebox{{$i$}}}
\put(27,0){\makebox{{$j$}}}

\end{picture}
}
\def\orientedcutpt#1{
\setlength{\unitlength}{#1}
\begin{picture}(20,20)
\put(5,0){\line(0,1){20}}
\put(5.,8){\path(0,0)(2,2)(-2,2)(0,0)} 

\end{picture}
}
\def\alexnumcp#1{
\setlength{\unitlength}{#1}
\begin{picture}(20,20)
\put(5,0){\line(0,1){20}}
\put(5.,8){\path(0,0)(2,2)(-2,2)(0,0)} 

\put(7,13){\makebox{$i$}}
\put(7,3){\makebox{$i+1$}}
\end{picture}
}
\def\canocutsysi#1#2{
\setlength{\unitlength}{#1}
\begin{picture}(35,20)

\put(5,0){\line(1,1){20}}
\put(25,0){\line(-1,1){20}}
\put(15,10){\circle{4}}

\put(10,5){\path(0,0)(-3,0)(0,-3)(0,0)}
\put(22,3){\path(0,0)(-3,0)(0,3)(0,0)}

\put(5,0){\path(0,3)(0,0)(3,0)}
\put(25,0){\path(-3,0)(0,0)(0,3)}

\ifnum#2=2
\put(0,15){\makebox{{$i$}}}
\put(0,0){\makebox{{$i$}}}
\put(27,15){\makebox{{$i+1$}}}
\put(27,0){\makebox{{$i+1$}}}
\fi

\end{picture}
}
\def\ocpmovei#1{
\setlength{\unitlength}{#1}
\begin{picture}(60,20)
\put(0,0){\line(1,1){20}}
\put(20,0){\line(-1,1){20}}
\put(10,10){\circle{5}}
\put(15,15){\path(0,0)(3,0)(0,3)(0,0)}

{\allinethickness{.8pt}
\put(25,10){\vector(1,0){10}}
\put(35,10){\vector(-1,0){10}}}

\put(40,0){\line(1,1){20}}
\put(60,0){\line(-1,1){20}}
\put(50,10){\circle{5}}
\put(43,3){\path(0,0)(3,0)(0,3)(0,0)}
\end{picture}
}
\def\ocpmoveii#1{
\setlength{\unitlength}{#1}
\begin{picture}(55,20)
\put(5,0){\line(0,1){20}}
\put(5.,5){\path(0,0)(2,2)(-2,2)(0,0)} 
\put(5.,15){\path(0,0)(2,-2)(-2,-2)(0,0)}

\put(10,7){\makebox{{or}}}

\put(23,0){\line(0,1){20}}
\put(23.,7){\path(0,0)(2,-2)(-2,-2)(0,0)} 
\put(23.,13){\path(0,0)(2,2)(-2,2)(0,0)}

{\allinethickness{.8pt}
\put(30,10){\vector(1,0){10}}
\put(40,10){\vector(-1,0){10}}}
\put(50,0){\line(0,1){20}}
\end{picture}
}
\def\ocpmoveiiia#1{
\setlength{\unitlength}{#1}
\begin{picture}(60,20)

\put(0,0){\line(1,1){20}}
\put(20,0){\line(-1,1){9}}
\put(9,11){\line(-1,1){9}}

\put(3,3){\path(0,0)(3,0)(0,3)(0,0)}
\put(17,3){\path(0,0)(0,3)(-3,0)(0,0)}
\put(17,17){\path(0,0)(-3,0)(0,-3)(0,0)}
\put(3,17){\path(0,0)(0,-3)(3,0)(0,0)}

{\allinethickness{.8pt}
\put(25,10){\vector(1,0){10}}
\put(35,10){\vector(-1,0){10}}}

\put(40,0){\line(1,1){20}}
\put(60,0){\line(-1,1){9}}
\put(49,11){\line(-1,1){9}}

\end{picture}
}
\def\ocpmoveiiib#1{
\setlength{\unitlength}{#1}
\begin{picture}(60,20)

\put(0,0){\line(1,1){20}}
\put(20,0){\line(-1,1){9}}
\put(9,11){\line(-1,1){9}}

\put(5,5){\path(0,0)(-3,0)(0,-3)(0,0)}
\put(15,5){\path(0,0)(0,-3)(3,0)(0,0)}
\put(15,15){\path(0,0)(3,0)(0,3)(0,0)}
\put(5,15){\path(0,0)(0,3)(-3,0)(0,0)}

{\allinethickness{.8pt}
\put(25,10){\vector(1,0){10}}
\put(35,10){\vector(-1,0){10}}}

\put(40,0){\line(1,1){20}}
\put(60,0){\line(-1,1){9}}
\put(49,11){\line(-1,1){9}}

\end{picture}
}
\begin{document}
\title[Cyclic coverings of  virtual link diagrams]
{Cyclic coverings of  virtual link diagrams}
\author{Naoko Kamada} 
\thanks{This work was supported by JSPS KAKENHI Grant Number 15K04879.}
\address{ Graduate School of Natural Sciences,  Nagoya City University\\ 
1 Yamanohata, Mizuho-cho, Mizuho-ku, Nagoya, Aichi 467-8501 Japan
}

\date{}

\begin{abstract} 
A virtual link diagram is called mod $m$ almost classical if it admits an Alexander numbering valued in integers modulo $m$, and a virtual link is called mod $m$ almost classical 
if it has a mod $m$ almost classical diagram as a representative. 
In  this paper, we introduce a method of constructing a mod $m$ almost classical virtual link diagram from a given virtual link diagram, which we call an $m$-fold cyclic covering diagram.   The main result is that 
$m$-fold cyclic covering diagrams obtained from two equivalent virtual link diagrams are equivalent. Thus we have a well-defined map from the set of virtual links to the set of mod $m$ almost classical virtual links.   Some applications are also given.  

\end{abstract}
\maketitle

\section{Introduction}

Virtual links, introduced by L. H. Kauffman  \cite{rkauD},   correspond to  abstract links \cite{rkk} and 
stable equivalence classes of  links in thickened surfaces \cite{rCKS,rkk}. 
A virtual link diagram is called {\it almost classical}  if it admits an Alexander numbering (cf. \cite{rsilver}), and it is called 
 {\it mod $m$ almost classical}  
if it admits an Alexander numbering in $\mathbb{Z}_m$ (cf. \cite{rboden}). 
A virtual link is called almost classical (resp. mod $m$ almost classical)  if it has an almost classical (resp. mod $m$ almost classical) virtual link 
diagram as a representative. 
Every classical link diagram is almost classical, and every almost classical virtual link diagram is mod $m$ almost classical. 
A virtual link diagram is checkerboard colorable if and only if it is mod 2 almost classical. 
It is known that Jones polynomials of mod $2$ almost classical virtual links have a property that Jones polynomials of classical links have (\cite{rkn0, rkn2004}).  
Alexander polynomials for mod $m$ almost classical virtual links can be defined in a similar way to those for almost classical link diagrams \cite{rboden}. 

In this paper, we introduce the notion of an oriented cut point and a cut system for a virtual link diagram, which is an extension of (unorieted) cut points introduced by H.~Dye in \cite{rdye2, rdye2017}.  
For any pair $(D,P)$ of a virtual link diagram $D$ and a cut system $P$, we construct a virtual link diagram $\varphi_m(D,P)$ which is mod $m$ almost classical. We call it an {\it $m$-fold cyclic covering (virtual link) diagram}  of $(D,P)$.   

It turns out that the strong equivalence class of $\varphi_m(D,P)$ does not depend on $P$, namely, for any cut systems $P$ and $P'$ of the same virtual link diagram $D$, $\varphi_m(D,P)$ and $\varphi_m(D,P')$ are strongly equivalent (Lemma~\ref{lem2}).  
Our main theorem (Theorem~\ref{thm2}) states that if virtual link diagrams $D$ and $D'$ are equivalent, then $\varphi_m(D,P)$ and $\varphi_m(D', P')$ are equivalent.  
Thus, we obtain a well-defined map from the set of virtual links to the set of mod $m$ almost classical virtual links. 

As an application, we demonstrate how Theorem~\ref{thm2} is used to show that two virtual link diagrams are not equivalent. Theorem~\ref{thm2} implies Theorem~\ref{thm:appli} that if $\varphi_m(D,P)$ is not equivalent to a disjoint union of $m$ copies of $D$ itself then $D$ is never equivalent to a mod $m$ virtual link diagram, i.e., the virtual link represented by $D$ is not mod $m$ almost classical.  

This paper is organized as follows: In Section~\ref{secnormal} we recall virtual link diagrams and Alexander numberings, and introduce the notions of an oriented cut point and a cut system.  
In~Section~\ref{sect:CyclicCover1} we give a method of construction of $\varphi_m(D,P)$.  It is shown that $\varphi_m(D,P)$ is a mod $m$ almost classical virtual link diagram.   
In Section~\ref{sect:main}, main results,  
Lemma~\ref{lem2} and Theorem~\ref{thm2}, are introduced and proved.  
In Section~\ref{sect:CyclicCover2} we give an alternative method of constructing cyclic covering virtual link diagrams. 
In Section~\ref{application} we show some applications.

\section{Alexander numberings and cut systems}\label{secnormal} 

In this section we recall virtual link diagrams and Alexander numberings, and introduce the notions of an oriented cut point and a cut system, which are used for our construction of cyclic covering diagrams.  

A {\it virtual link diagram\/} is a generically immersed, closed and oriented 1-manifold in $\mathbb{R}^2$ with information of positive, negative or virtual crossing, on each double point.  Here a {\it virtual crossing\/} means an encircled double point without over-under information \cite{rkauD}. 
%
%
{\it Generalized Reidemeister moves} are the local moves depicted in Figure~ \ref{fgmoves}: The 3 moves on the top are {\it (classical) Reidemeister moves} and the 4 moves on the bottom are so-called {\it virtual Reidemeister moves}. 
Two virtual link diagrams $D$ and $D'$ are said to be {\it equivalent} (resp. {\it strongly equivalent}) 
 if they are related by a finite sequence of generalized Reidemeister moves (resp. virtual Reidemeister moves) and isotopies of $\mathbb{R}^2$.  
A {\it virtual link\/}  (resp. a {\it pre-virtual link}) is an equivalence class (resp. a strong equivalence class) 
of virtual link diagrams. 

\vspace{0.3cm}
\begin{figure}[h]
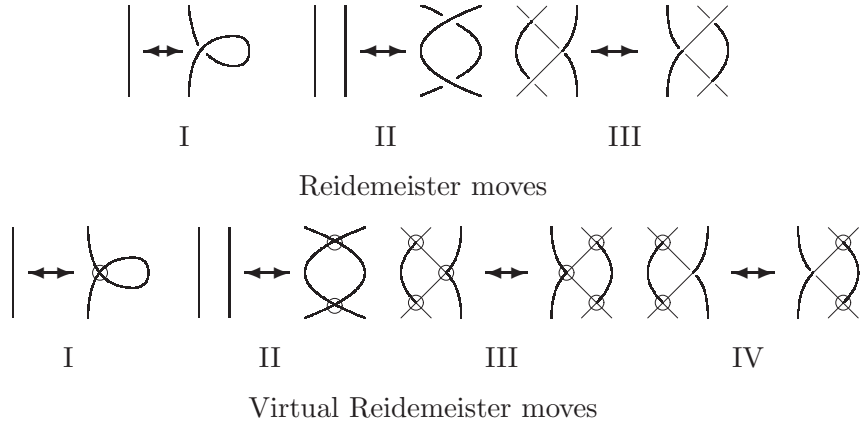

\centerline{
\begin{tabular}{ccc}
\rmoveio{.4mm}{1}&\rmoveiio{.4mm}{1}&\rmoveiiio{.4mm}{1}\\
I&II&III\\
\multicolumn{3}{c}{Reidemeister moves}
\end{tabular}}
\vspace{0.2cm}
\centerline{
\begin{tabular}{cccc}
\rmovevio{.4mm}{1}&\rmoveviio{.4mm}{1}&\rmoveviiio{.4mm}{1}&\rmovevivo{.4mm}{1}\\
I&II&III&IV\\
\multicolumn{4}{c}{Virtual Reidemeister moves}
\end{tabular}}
\caption{Generalized Reidemeister moves}\label{fgmoves}
\end{figure}

A {\it virtual path} of a virtual link diagram $D$ means a path (possibly a loop) on $D$ on which there are no classical crossings. 
A virtual link diagram $D'$ is said to be obtained from $D$ by 
a {\it detour move} if $D'$ is obtained by replacing a virtual path of $D$ with 
a path which is a virtual path of $D'$.  Two diagrams $D$ and $D'$ are strongly equivalent if and only if they are related by a finite sequence of detour moves and isotopies of $\mathbb{R}^2$ 
 (cf. \cite{rkk, rkauD}).

Let $D$ be a virtual link diagram. 
{\it A semi-arc} of $D$ is a virtual path which is an immersed arc between two classical crossings of $D$ or an immersed  loop.  
Let $m$ be a non-negative integer. 
An {\it Alexander numbering} (resp. a {\it mod $m$ Alexander numbering}) 
of $D$ is an assignment of a number  of $\mathbb{Z}$ (resp. $\mathbb{Z}_m$) 
to each semi-arc of $D$ such that the numbers of 4 semi-arcs  around each classical crossing are as  shown in Figure~\ref{fgalexnum} for some $i \in \mathbb{Z}$ (resp. $i \in \mathbb{Z}_m$).

\vspace{0.3cm}
\begin{figure}[h]
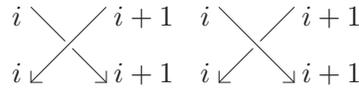

\centerline{
\alexnum{.5mm}
}
\caption{Alexander numbering}\label{fgalexnum}
\end{figure}

Note that the numbers assigned to semi-arcs around a virtual  crossing  is depicted as in Figure~\ref{fgalexnumv}.
\begin{figure}[h]
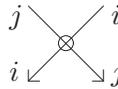

\centerline{
\alexnumv{.5mm}
}
\caption{Alexander numbering around a virtual crossing}\label{fgalexnumv}
\end{figure}

An example of an Alexander numbering is depicted in Figure~\ref{fgexAlexN1}.  A classical link diagram always admits an Alexander numbering. 
\begin{figure}[h]
\centerline{
\includegraphics[width=2.5cm]{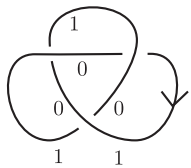}
}
\caption{An Alexander numbering of a classical link diagram}\label{fgexAlexN1}
\end{figure}

Not every virtual link diagram admits an Alexander numbering.   The virtual link diagram depicted in Figure~\ref{fgexAlexN2} (i) does not admit an Alexander numbering,  and the virtual link diagram 
in Figure~\ref{fgexAlexN2} (ii) does. 

\begin{figure}[h]
\centerline{
\includegraphics[width=7.cm]{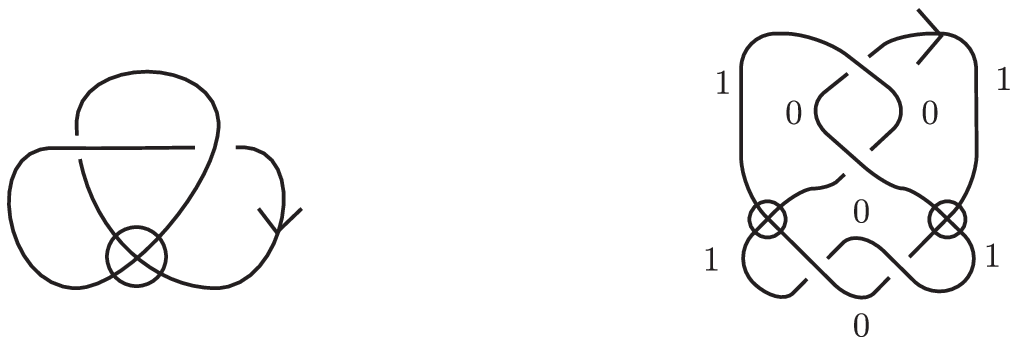}
}
\centerline{(i)\hspace{4.5cm}(ii)}
\caption{Virtual link diagrams which does/does not admit an Alexander numbering}\label{fgexAlexN2}
\end{figure}

Figure~\ref{fgexMAlexN1} shows an example of a mod $3$ Alexander numbering, which is not an Alexander numbering.

\begin{figure}[h]
\centerline{
\includegraphics[width=4.5cm]{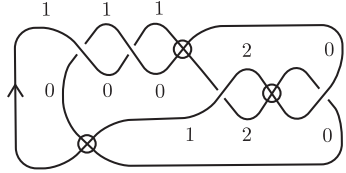}
}
\caption{An mod $3$ Alexander numbering of a virtual link diagram}\label{fgexMAlexN1}
\end{figure}

A virtual link diagram  is {\it almost classical} (resp. {\it mod $m$ almost classical\/}) 
if it admits an Alexander numbering (resp. a mod $m$ Alexander numbering). 
A virtual link $L$ is {\it almost classical\/} (resp. {\it mod $m$ almost classical})  
if there is an almost classical (resp. mod $m$ almost classical) virtual link diagram of $L$.

H. Boden, R.Gaudreau, E. Harper, A. Nicas, L. White \cite{rboden} studied mod $m$ almost classical virtual links. 
By definition, 
any almost classical virtual link diagram is mod $m$ almost classical.  
A virtual link diagram is checkerboard colorable  if and only if it is mod $2$ almost classical.
It is shown in \cite{rboden} that for a mod $m$ almost classical virtual knot $K$, if $D$ is a minimal virtual knot diagram of $K$, then $D$ is mod $m$ almost classical.

H. Dye introduced the notion of a cut point \cite{rdye2}, which is an \lq unoriented\rq \, cut point in our sense.  
The author \cite{rkn2004} generalized the Kauffman-Murasugi-Thistlethwaite theorem (\cite{rkau1987, rmurasugi, rthis}) on the span of the Jones polynomial of a classical link to checkerboard colorable and proper virtual links. 
Using cut points, H. Dye \cite{rdye2017} further extended this result to virtual link diagrams that are not checkerboard colorable.

Using (unoriented) cut points, 
the author constructed in  \cite{rkn2, rkn3} a map from the set of virtual links to the set of checkerboard colorable virtual links, i.e., the set of mod $2$ almost classical virtual links.  In this paper, we generalize this to the mod $m$ case. 

An {\it oriented cut point} or simply a {\it cut point} is a point on an arc at which a local orientation of the arc is given. In this paper we denote it by a small triangle on the arc as in Figure~\ref{fgorientedcutpt}.  
Whenever cut points on a virtual link diagram are discussed, we assume that they are on semi-arcs of the diagram avoiding  crossings.  An oriented cut point is called  {\it coherent} (resp.  {\it incoherent}) if the local orientation indicated by the cut point  is coherent  (resp. incoherent) to the orientation of the virtual link diagram.

\vspace{0.3cm} 
\begin{figure}[h]
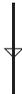

\centerline{
\orientedcutpt{.6mm}
}
\caption{An oriented cut point on an arc}\label{fgorientedcutpt}
\end{figure}

Let $D$ be  a virtual link diagram and $P$ a set of oriented cut points of $D$. 
We say that $P$ is a {\it cut system}  if 
$D$ admits an Alexander numbering such that 
at each oriented cut point,  the number increases by one in the direction of the 
oriented cut point (Figure~\ref{fgalexnumorict}).   
Such an Alexander numbering is called an {\it Alexander numbering of a virtual link diagram with a cut system}. 
See Figure~\ref{fgExcutPtAlex} for examples. 

\vspace{0.3cm} 
\begin{figure}[h]
\centerline{
\alexnum{.5mm}\hspace{.6cm}\alexnumv{.5mm}\hspace{.6cm}\alexnumcp{.5mm}
}
\caption{Alexander numbering of a virtual link diagram with a cut system}\label{fgalexnumorict}
\end{figure}
\vspace{0.3cm} 
\begin{figure}[h]
\centerline{
\includegraphics[width=8.5cm]{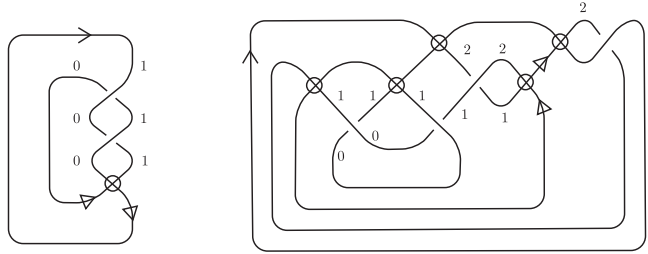}
}
\caption{Alexander numberings of virtual link diagrams with cut systems}\label{fgExcutPtAlex}
\end{figure}

For a virtual link diagram $D$ with a cut system $P$, let ${\rm Arc}(D,P)$ be the set of arcs (or loops) obtained from semi-arcs of $D$ by cutting along $P$.  (If there is a semi-arc of $D$ which is a loop and has no cut points of $P$, then ${\rm Arc}(D,P)$ has the loop as an element.)  An Alexander numbering of $D$ with $P$ is regarded as a map from ${\rm Arc}(D,P)$ to ${\mathbb Z}$.  For a semi-arc $a$ of $D$ not being a loop, we denote by $a_-$ (resp. $a_+$) the arc of ${\rm Arc}(D,P)$ which contains the starting point (resp. the terminal point) of $a$.  

\begin{lem}\label{lem:CutNumberOnArc}
Let $f : {\rm Arc}(D,P) \to {\mathbb Z}$ be an Alexander numbering of a virtual link diagram $D$ with a cut system $P$.  
\begin{itemize}
\item[(1)] For any semi-arc $a$ of $D$ not being a loop, $f(a_+) - f(a_-)$ is the number of coherent cut points minus the number of incoherent cut points of $P$ appearing on $a$. 
\item[(2)] For any semi-arc $a$ of $D$ being loop, the number of coherent cut points minus the number of incoherent cut points of $P$ appearing on  $a$ is $0$.  
\end{itemize}
\end{lem}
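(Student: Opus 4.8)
The plan is to reduce everything to a one-dimensional bookkeeping along a single semi-arc, since both assertions concern only the cut points lying on one fixed semi-arc $a$ of $D$. The starting observation is that, $a$ being a virtual path, it contains no classical crossing, so the only features of $D$ met in the interior of $a$ are virtual crossings and the cut points of $P$ on $a$ (which avoid all crossings by convention).

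Before the telescoping computation I would record two facts. First, if an arc $b\in\mathrm{Arc}(D,P)$ runs through a virtual crossing --- possibly through the same crossing more than once, since $b$ need not be embedded --- the rule of Figure~\ref{fgalexnumv} forces the values of $f$ on the two branches to agree, so $f(b)$ is genuinely a single well-defined integer. Second, if $p\in P$ lies on $a$ and $b^-,b^+\in\mathrm{Arc}(D,P)$ are the two arcs obtained by cutting $a$ at $p$, labelled so that traversing $a$ in the orientation direction of $D$ one passes from $b^-$ to $b^+$, then the defining property of an Alexander numbering with a cut system (Figure~\ref{fgalexnumorict}) gives $f(b^+)-f(b^-)=+1$ when $p$ is coherent and $f(b^+)-f(b^-)=-1$ when $p$ is incoherent. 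Getting this sign dictionary right --- matching ``the number increases by one in the direction of the cut point'' with the direction of travel along $a$ --- is the only genuinely delicate point, and it is exactly where the coherent/incoherent distinction is used.

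For part (1) I would take a non-loop semi-arc $a$, list its cut points $p_1,\dots,p_k$ in the order met when $a$ is traversed from its starting point to its terminal point, and write $a^{(0)}=a_-,\,a^{(1)},\dots,a^{(k)}=a_+$ for the arcs of $\mathrm{Arc}(D,P)$ produced by the cuts, with $p_j$ separating $a^{(j-1)}$ from $a^{(j)}$. By the second fact above, $f(a^{(j)})-f(a^{(j-1)})$ equals $+1$ at a coherent $p_j$ and $-1$ at an incoherent one, so the telescoping sum $f(a_+)-f(a_-)=\sum_{j=1}^{k}\bigl(f(a^{(j)})-f(a^{(j-1)})\bigr)$ is exactly the number of coherent minus the number of incoherent cut points on $a$ (when $k=0$ one has $a_-=a_+$ and the statement is $0=0$).

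For part (2) I would take a loop semi-arc $a$; if no cut point lies on $a$ there is nothing to prove, so let $p_1,\dots,p_k$ ($k\ge1$) be the cut points of $P$ on $a$ in cyclic order, and $a^{(1)},\dots,a^{(k)}$ the resulting arcs, with $p_j$ separating $a^{(j-1)}$ from $a^{(j)}$ and indices read mod $k$. The same local rule applies at each $p_j$, and because going once around the loop returns to the arc one started on, $\sum_{j=1}^{k}\bigl(f(a^{(j)})-f(a^{(j-1)})\bigr)=0$; the left-hand side is again the number of coherent minus the number of incoherent cut points on $a$, which is therefore $0$. Beyond the sign convention flagged above, the only mild nuisance is that arcs of $\mathrm{Arc}(D,P)$ may be non-embedded, which is why the first fact about virtual crossings is stated separately; no further subtlety arises, and the lemma is essentially a consistency bookkeeping statement.
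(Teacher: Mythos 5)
Your proof is correct and takes essentially the same route as the paper, which simply observes that moving along $a$ from $a_-$ to $a_+$ the value of $f$ changes by $+1$ at each coherent cut point and $-1$ at each incoherent one. Your version just makes the telescoping sum, the sign convention, and the loop case explicit.
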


\begin{proof} 
It is obvious, since when we move along $a$ from $a_-$ to $a_+$, 
the numbers assigned by $f$ changes by $+1$ (resp. $-1$) at each coherent (resp. incoherent) cut point.  
\end{proof}

A {\it canonical cut system} of a virtual link diagram is a cut system which is obtained by introducing 
 two oriented cut points as in Figure~\ref{fg:canocutsys} around each classical crossing. 
It is really a cut system and an Alexander numbering  looks as in Figure~\ref{fg:canocutsys} around each virtual crossing. 
 
\vspace{0.3cm}
\begin{figure}[h]
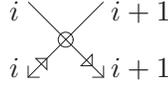

\centerline{
\canocutsysi{.5mm}{2}}
\caption{Canonical cut system}\label{fg:canocutsys}
\end{figure}
%

%

The local transformations of oriented cut points depicted in Figure~\ref{fgOrientedCutmv} are called 
 {\it oriented cut point moves}.  For a virtual link diagram with a cut system, the result by an oriented cut point move  is also a cut system of the same virtual link diagram.   
Note that the move III$'$ depicted in Figure~\ref{fgOrientedCutmv} is obtained from the move III modulo the moves II. 

\vspace{0.3cm}
\begin{figure}[h]
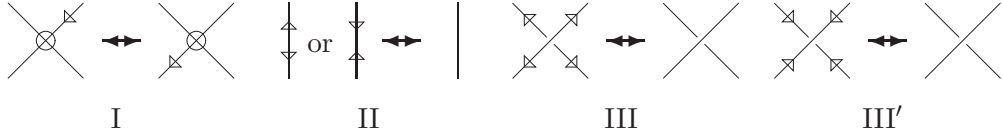

\centerline{
\begin{tabular}{cccc}
\ocpmovei{.5mm}&\ocpmoveii{.5mm}&\ocpmoveiiia{.5mm}&\ocpmoveiiib{.5mm}\\
I&II&III&III$'$\\
\end{tabular}
}
\caption{Oriented cut moves}\label{fgOrientedCutmv}
\end{figure}

\begin{thm} \label{thm:cutpointmove}
Two cut systems of the same virtual link diagram are related by a sequence of oriented cut point moves. 
\end{thm}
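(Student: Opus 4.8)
The plan is to translate the statement into a combinatorial identity about integer-valued functions on the set of semi-arcs of $D$, using the two Alexander numberings that come with the two cut systems. The only properties of the moves I shall need at the outset are that along a crossing-free stretch of a semi-arc an adjacent pair of cut points with opposite local orientations may be deleted (move II) and that a cut point may be slid across a virtual crossing (move I). A short argument then shows that on any single semi-arc $a$ of $D$, repeated use of moves I and II brings an arbitrary set of cut points lying on $a$ to the \emph{standard} configuration: $|n|$ mutually parallel cut points on one short subarc of $a$, coherent if $n>0$, incoherent if $n<0$, and none if $n=0$, where $n=n_P(a)$ denotes the number of coherent minus the number of incoherent cut points of $P$ on $a$. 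Since these moves are reversible and local, two cut systems $P$, $P'$ of $D$ with $n_P(a)=n_{P'}(a)$ for \emph{every} semi-arc $a$ are related by moves of types I and II. So it will suffice to transform $P'$, by oriented cut point moves, into a cut system $P''$ with $n_{P''}=n_P$ on every semi-arc.

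The key step is to extract an identity from the two Alexander numberings. Fix Alexander numberings $f\colon{\rm Arc}(D,P)\to\mathbb Z$ and $f'\colon{\rm Arc}(D,P')\to\mathbb Z$. Around a classical crossing $c$ the four incident semi-arcs receive, under any Alexander numbering, two values $i$ and two values $i+1$ (Figure~\ref{fgalexnum}), and \emph{which} two of the four receive the smaller value is determined by $D$ and the orientations near $c$ alone, not by the numbering. Let $i_c$, $i'_c$ be the smaller values for $f$, $f'$, set $e_c:=i_c-i'_c$, and consider $h:=f-f'$ on the common refinement ${\rm Arc}(D,P\cup P')$. For a semi-arc $a$ running from a crossing $c$ to a crossing $c'$, the previous remark gives $h(a_-)=e_c$ and $h(a_+)=e_{c'}$, while the computation in the proof of Lemma~\ref{lem:CutNumberOnArc} applied to $h$ (which increases by $1$ at each coherent cut point of $P$ and each incoherent cut point of $P'$, and decreases by $1$ at the others) gives $h(a_+)-h(a_-)=n_P(a)-n_{P'}(a)$. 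Hence
\[
 n_P(a)-n_{P'}(a)=e_{c'}-e_c \quad\text{for every semi-arc } a,
\]
both sides vanishing when $a$ is a loop or has both endpoints at one crossing. Thus $n_P-n_{P'}$ is the coboundary of the function $c\mapsto e_c$ on the $4$-valent graph whose vertices are the classical crossings and whose edges are the semi-arcs; and after replacing $f'$ by $f'+k$ for a suitable integer $k$ we may assume $e_c\ge 0$ for all $c$.

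It remains to realise this coboundary by move III. One application of move III at a crossing $c$, in the direction that creates cut points, inserts one cut point near $c$ on each of the four incident semi-arcs, oriented so that the signed count $n_{(\cdot)}$ increases by $1$ on each incoming semi-arc and decreases by $1$ on each outgoing one — precisely the elementary coboundary at $c$ (move III$'$, which differs from move III only by moves of type II, handles any strand on which the orientation convention drawn in Figure~\ref{fgOrientedCutmv} produces the opposite sign, and since $e_c\ge 0$ only the creating direction is needed). Performing move III at the crossing $c$ a total of $e_c$ times, for every classical crossing $c$, turns $P'$ into a cut system $P''$ with $n_{P''}(a)-n_{P'}(a)=e_{c'}-e_c=n_P(a)-n_{P'}(a)$, hence $n_{P''}=n_P$ everywhere; by the first paragraph $P''$ and $P$ are then related by moves of types I and II, so $P$ and $P'$ are related by oriented cut point moves.

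The step I expect to be the main obstacle is pinning down the exact effect of move III, and the precise role of move III$'$, on the signed count function $n_{(\cdot)}$ — that is, matching the orientation conventions of Figure~\ref{fgOrientedCutmv} with the coboundary bookkeeping above. A secondary point needing care is the assertion that the partition of the four semi-arcs at a crossing into ``low'' and ``high'' pairs is independent of the Alexander numbering, which is exactly what makes $h(a_-)$ depend only on the crossing and hence makes the identity for $n_P-n_{P'}$ work.
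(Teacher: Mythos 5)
Your proposal is correct and follows essentially the same route as the paper: the paper also applies move III at each classical crossing until the two Alexander numberings agree on the four local semi-arcs (your $e_c$ bookkeeping makes this step explicit), then invokes Lemma~\ref{lem:CutNumberOnArc} to conclude the signed cut-point counts agree on every semi-arc, and finishes with moves I and II. Your writeup simply supplies the coboundary details that the paper leaves implicit.
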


\begin{proof} 
Let $P$ and $P'$ be cut systems of a virtual link diagram $D$. 
Let $f$ (resp. $f'$) be an Alexander numbering of $D$ with cut system $P$ (resp. $P')$. 
Applying a finite number of oriented cut point moves III to $P$, we obtain a cut system $P''$ and an Alexander numbering $f''$ such that 
the numberings of 4 edges around each classical crossing  are as same as those of $f'$. 
By Lemma~\ref{lem:CutNumberOnArc}, we see that for any semi-arc $a$ of $D$,  
the number of coherent cut points minus the number of incoherent cut points of $P''$ appearing on $a$  is equal to that of $P'$.  Thus, by using oriented cut point moves I and II, $P''$ can be transformed to $P'$.  
\end{proof}

\begin{cor}\label{cor:numbers}
Let $D$ be a virtual link diagram and let $P$ be a cut system of $D$. The number of coherent cut points of $P$ equals that of incoherent cut points of $P$. 
\end{cor}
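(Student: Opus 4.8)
The plan is to combine Lemma~\ref{lem:CutNumberOnArc} with the local shape of an Alexander numbering at a classical crossing. Fix an Alexander numbering $f\colon{\rm Arc}(D,P)\to{\mathbb Z}$ of $D$ with the cut system $P$, and put $\nu:=\#\{\text{coherent cut points of }P\}-\#\{\text{incoherent cut points of }P\}$. Since every cut point of $P$ lies in the interior of a unique semi-arc of $D$, $\nu$ is the sum, over all semi-arcs $a$ of $D$, of the integer ``(number of coherent cut points on $a$) $-$ (number of incoherent cut points on $a$)''. By Lemma~\ref{lem:CutNumberOnArc} this integer is $0$ when $a$ is a loop and equals $f(a_+)-f(a_-)$ when $a$ is not a loop, so $\nu=\sum_a\bigl(f(a_+)-f(a_-)\bigr)$, the sum ranging over the non-loop semi-arcs of $D$.

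The main step is to evaluate this sum by collecting its terms crossing by crossing. A non-loop semi-arc $a$ has its two endpoints at classical crossings; attach the summand $+f(a_+)$ to the crossing containing the terminal endpoint of $a$ and the summand $-f(a_-)$ to the crossing containing its initial endpoint. At a classical crossing $c$ exactly two semi-arcs terminate and exactly two emanate, so the summands attached to $c$ total $\bigl(f(e_1)+f(e_2)\bigr)-\bigl(f(e_1')+f(e_2')\bigr)$, where $e_1,e_2\in{\rm Arc}(D,P)$ are the two arcs incident to $c$ on its incoming side and $e_1',e_2'\in{\rm Arc}(D,P)$ the two on its outgoing side (these are bona fide elements of ${\rm Arc}(D,P)$ because cut points avoid crossings). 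Rotating the plane, we may assume that the two strands through $c$ both point upward, which is possible since they are transverse and hence not anti-parallel. The defining condition for an Alexander numbering (Figure~\ref{fgalexnum}) then says that the four arcs at $c$ carry the values $i,i,i+1,i+1$, with the two arcs on the left carrying $i$ and the two on the right carrying $i+1$; consequently the two incoming arcs carry $i$ and $i+1$, and so do the two outgoing arcs. Hence $f(e_1)+f(e_2)=2i+1=f(e_1')+f(e_2')$, so $c$ contributes $0$, and summing over all classical crossings yields $\nu=0$, that is, the number of coherent cut points of $P$ equals the number of incoherent cut points of $P$.

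I expect the only subtle point to be the bookkeeping at a crossing: one must be sure that the incoming side of a crossing meets one arc labelled $i$ and one labelled $i+1$, rather than two arcs of the same label, and this is precisely what the left/right (rather than top/bottom) placement of the labels in Figure~\ref{fgalexnum} guarantees once both strands are drawn pointing upward. (A semi-arc forming a loop based at a single crossing is counted once on the incoming side and once on the outgoing side of that crossing, and the same local computation still applies.) Alternatively, one could deduce the corollary from Theorem~\ref{thm:cutpointmove} by verifying that $\nu=0$ for the canonical cut system and that $\nu$ is preserved by each oriented cut point move of Figure~\ref{fgOrientedCutmv}; the direct computation above is shorter.
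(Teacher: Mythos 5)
Your proof is correct, but it takes a genuinely different route from the paper. The paper's proof is a two-line reduction: the canonical cut system has one coherent and one incoherent cut point at each classical crossing, each oriented cut point move of Figure~\ref{fgOrientedCutmv} preserves the difference $\nu$, and Theorem~\ref{thm:cutpointmove} then transports the conclusion to an arbitrary cut system --- exactly the alternative you sketch in your closing remark. Your main argument instead computes $\nu$ directly: Lemma~\ref{lem:CutNumberOnArc} converts $\nu$ into the telescoping sum $\sum_a\bigl(f(a_+)-f(a_-)\bigr)$ over non-loop semi-arcs, and the local form of an Alexander numbering at a crossing (incoming arcs labelled $i$ and $i+1$, outgoing arcs labelled $i$ and $i+1$, so each crossing contributes $(2i+1)-(2i+1)=0$) kills the sum. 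Your bookkeeping is sound, including the cases of loop semi-arcs and of a semi-arc beginning and ending at the same crossing. What the paper's route buys is brevity and reuse of machinery already established; what yours buys is independence from Theorem~\ref{thm:cutpointmove} --- it needs only Lemma~\ref{lem:CutNumberOnArc} and the definition of an Alexander numbering, so it would still work in a context where one has not yet proved that all cut systems are related by cut point moves. Both are valid proofs of the corollary.
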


\begin{proof}
The canonical cut system for $D$ has the property that the number of coherent cut points  equals that of incoherent cut points.  Since each oriented cut point move preserves this property, by Theorem~\ref{thm:cutpointmove} we see that any cut system has the property.  
\end{proof}

\section{Cyclic coverings of  virtual link diagrams}\label{sect:CyclicCover1}

In this section,  we introduce a method of constructing a mod $m$ almost classical virtual link diagram $\varphi_m(D,P)$, which is determined up to strong equivalence,  from a virtual link diagram $D$ with a cut system $P$. 

\vspace{0.3cm}
We denote by a pair $(D,P)$ a virtual link diagram $D$ with a cut system $P$. 
Moving $(D,P)$ slightly by an isotopy of ${\mathbb R}^2$, we assume that each cut point $p$ of $P$ is on a horizontal line $\ell(p)$ in ${\mathbb R}^2$ such that 
$\ell(p)$ intersects $D$ transversely avoiding all crossings of $D$ and $p$ is a unique cut point of $P$ on $\ell(p)$. 
Let $(D^0, P^0), (D^1,  P^1), \dots, (D^{m-1}, P^{m-1})$ be $m$ parallel copies of $(D, P)$ with $(D^0, P^0)=(D,P)$ obtained from $(D,P)$ by sliding along the $x$-axis  such that they appear from  left to right in this order.  For each cut point $p \in P$, we denote by $p^k$ the copy of $p$ in $P^k$ for $k \in \{0, \dots, m-1\}$. See Figure~\ref{fgAlexcov1p} for an example. (The Alexander numberings in the figure are used later.) 

\begin{figure}[h]
\centerline{\includegraphics[width=9cm]{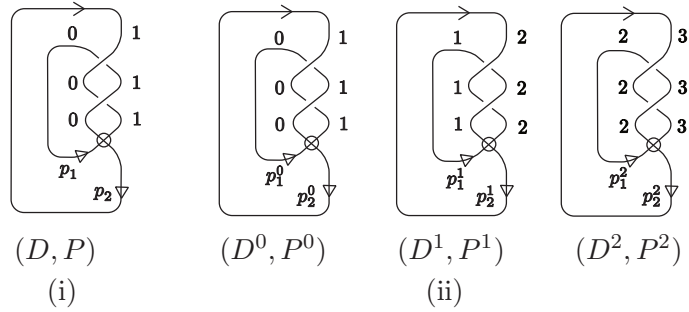}}
\centerline{$(D, P)$ \hspace{1.5cm}$(D^0, P^0)$ \hspace{.8cm}$(D^1, P^1)$ \hspace{.8cm}$(D^{2}, P^{2})$}
\centerline{(i)\hspace{4.5cm} (ii)\phantom{MMMMMMM}}
\caption{$3$ copies of a virtual link diagram with cut system}\label{fgAlexcov1p}
\end{figure}

For each $p \in P$, let $N(\ell(p))$ be a regular neighborhood of the horizontal line $\ell(p)$ in ${\mathbb R}^2$. 
In  Figure~\ref{fgAlexcov2p}, $N(\ell(p))$ is the part between two dotted lines parallel to $\ell(p)$.  
The diagram $\cup_{k=0}^{m-1}D^k$ looks locally near $N(\ell(p))$ 
as in the upper part of Figure~\ref{fgAlexcov2p}.  Replace it as in the lower part of the figure for every $p \in P$,  where the doted arc drawn in the very bottom of the figure means a virtual path and we may put it anyplace  as long as it contains only virtual crossings.  The virtual link diagram obtained this way is denoted by $\varphi_m(D, P)$ and is called an {\it $m$-fold cyclic covering (virtual link) diagram} of $(D, P)$. 

In the early stage of this construction, we modified $(D,P)$ by an isotopy of ${\mathbb R}^2$.  When we modify $(D,P)$ differently, the diagram $\varphi_m(D, P)$ may change. However it is preserved up to strong equivalence. 
Although this fact can be seen by observing how the diagram $\varphi_m(D, P)$ changes by a modification of $(D,P)$, we will show it in a more general situation as Theorem~\ref{thm:General} in Section~\ref{sect:CyclicCover2}. 

\begin{figure}[h]
\centerline{\includegraphics[width=15cm]{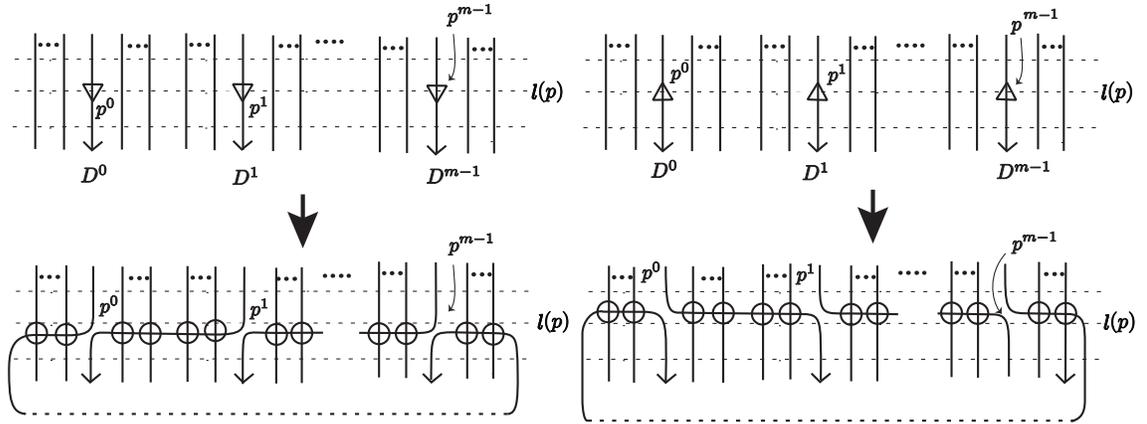}}
\caption{Construction of $m$ cyclic covering}\label{fgAlexcov2p}
\end{figure}

For example, for $(D, P)$  depicted in Figure~\ref{fgAlexcov1p} (i), a $3$-fold cyclic covering virtual link diagram $\varphi_3(D, P)$ is shown in Figure~\ref{fgExAlexcov1p}.

\begin{figure}[h]
\centerline{
\includegraphics[width=8cm]{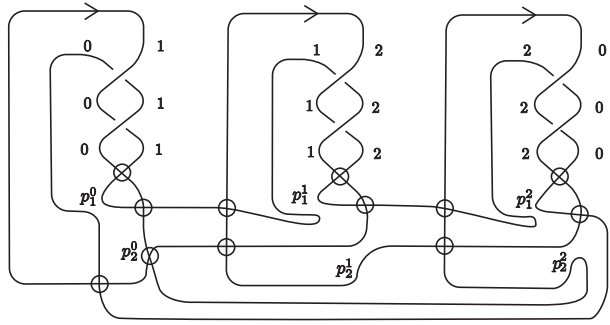}}
\caption{A $3$-fold cyclic covering virtual link diagram of $(D,P)$}\label{fgExAlexcov1p}
\end{figure}

\begin{prop}\label{thm1}
For a virtual link diagram $D$ with a cut system $P$, an $m$-fold cyclic covering virtual link diagram $\varphi_m(D, P)$ is mod $m$ almost classical.
\end{prop}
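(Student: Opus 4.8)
The plan is to exhibit an explicit mod $m$ Alexander numbering on $\varphi_m(D,P)$. The starting point is that $D$ itself, being equipped with a cut system $P$, carries an (integer-valued) Alexander numbering $f$ of $D$ with $P$, regarded via Lemma~\ref{lem:CutNumberOnArc} as a map $f:{\rm Arc}(D,P)\to{\mathbb Z}$. First I would transport $f$ to each of the $m$ parallel copies $D^k$: define $f^k:{\rm Arc}(D^k,P^k)\to{\mathbb Z}_m$ as the composition of the natural identification ${\rm Arc}(D^k,P^k)\cong{\rm Arc}(D,P)$ with $f$ followed by reduction modulo $m$, \emph{except} that I would shift the base value so that on the copy $D^k$ the numbering is $f$ reduced mod $m$ (the shift does not matter since a global constant is always an Alexander numbering symmetry, but it is convenient to keep all copies using the same reduction). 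Away from the neighborhoods $N(\ell(p))$, the diagram $\varphi_m(D,P)$ coincides with $\cup_{k=0}^{m-1}D^k$, so at every classical crossing — which lies entirely inside one copy $D^k$ and away from all the $\ell(p)$ — the four surrounding semi-arcs inherit the numbers dictated by $f^k$, and these satisfy the crossing condition of Figure~\ref{fgalexnum} modulo $m$ because $f$ satisfied it over ${\mathbb Z}$. Virtual crossings, including all the new virtual crossings created on the ``doted'' virtual path at the bottom of Figure~\ref{fgAlexcov2p}, impose only the condition of Figure~\ref{fgalexnumv}, namely that the numbering is transported unchanged along a strand through a virtual crossing; this is automatic for any assignment that is locally constant on semi-arcs, so the virtual path can indeed be routed anywhere.

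The one place where something must be checked is inside each band $N(\ell(p))$, where the construction cuts the $m$ strands $D^0,\dots,D^{m-1}$ crossing $\ell(p)$ and reconnects them (together with the auxiliary virtual path) as in the lower part of Figure~\ref{fgAlexcov2p}. The key step is to track, for a single cut point $p\in P$, how the Alexander numbers on the $m$ strands entering $N(\ell(p))$ from one side must match the $m$ strands leaving from the other side after the reconnection. By the defining property of a cut system, crossing the cut point $p$ in $D$ increases $f$ by $1$; hence on the $k$-th copy, the number changes from some value $v$ (mod $m$) below $\ell(p)$ to $v+1$ (mod $m$) above it. The reconnection in $\varphi_m(D,P)$ is designed precisely so that the strand carrying value $v+1$ on copy $k$ is spliced to the strand that \emph{already} carries value $v+1$ somewhere — concretely, the ``$+1$ shift at the cut point'' is absorbed by a cyclic relabeling $k\mapsto k+1\pmod m$ of the copies, and the extra virtual path closes up the one loose end (the jump from copy $m-1$ back to copy $0$, where the $+1$ wraps around modulo $m$). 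So the step is: verify that with the numbering ``$f\bmod m$ on copy $k$,'' the endpoints being glued inside $N(\ell(p))$ always carry equal values in ${\mathbb Z}_m$, so the assignment descends to a well-defined map on the semi-arcs of $\varphi_m(D,P)$. I would do this by choosing the labels explicitly: if the $m$ strands crossing $\ell(p)$ from below carry values $a,a,\dots,a$ (they do, since they are $m$ copies of the same semi-arc of $D$, one per copy), then from above they carry $a+1,a+1,\dots,a+1$; the new connection pattern matches each ``above'' end of one copy to an ``above'' end of another, all bearing the common value $a+1$, and likewise on the virtual-path side, so consistency is immediate. This is exactly the point at which working in ${\mathbb Z}_m$ rather than ${\mathbb Z}$ is essential: over ${\mathbb Z}$ the cyclic w鱼-around from copy $m-1$ to copy $0$ would force a jump of $m$, but modulo $m$ it is invisible.

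Having checked crossing-by-crossing (classical crossings: condition holds because $f$ was an integer Alexander numbering; virtual crossings: automatic; reconnection bands $N(\ell(p))$: the glued endpoints carry equal ${\mathbb Z}_m$-values by the cut-system property), I conclude that the assignment is a well-defined mod $m$ Alexander numbering of $\varphi_m(D,P)$, so $\varphi_m(D,P)$ is mod $m$ almost classical. The main obstacle, and really the only substantive content, is the bookkeeping inside $N(\ell(p))$: one must describe the reconnection precisely enough to see that the cyclic shift of copies exactly compensates the $+1$ jump at each cut point, and that the lone inserted virtual path accounts for the residue modulo $m$; once the picture in Figure~\ref{fgAlexcov2p} is read correctly this is routine, but it is where the proof has to engage with the actual definition of $\varphi_m(D,P)$ rather than with general nonsense about Alexander numberings. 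A clean way to present it is to first define the numbering on all of $\cup_k D^k$, note it is a mod $m$ Alexander numbering there (with the $+1$ discrepancy localized at the cut points), and then observe that the surgery in each band is built to reconcile precisely that discrepancy, routing the bottom virtual path through virtual crossings only so that it imposes no further constraint.
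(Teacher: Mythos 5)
Your overall strategy --- put a numbering on the disjoint union $\cup_{k}D^k$ and check that it descends across the reconnection at each cut point --- is the same as the paper's, but the numbering you actually write down is wrong, and the error sits exactly at the step you yourself identify as ``the only substantive content.'' You assign to every copy $D^k$ the \emph{same} numbering $f \bmod m$, asserting that the relative shift between copies ``does not matter since a global constant is always an Alexander numbering symmetry.'' A constant added to all of $\varphi_m(D,P)$ at once indeed does not matter, but the copies are glued to one another at the cut points, so their \emph{relative} shifts are precisely what the gluing tests. Concretely, at a coherent cut point $p$ the arc entering $p$ carries the value $a$ and the arc leaving $p$ carries $a+1$; the reconnection joins the incoming end on copy $k$ to the outgoing end on copy $k-1$ (in the notation of Section~\ref{sect:CyclicCover2}, $p^k_-$ is joined to $p^{k-\epsilon(p)}_+$). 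With your unshifted numbering these two ends carry $a$ and $a+1$ respectively, yet they lie on a single semi-arc of $\varphi_m(D,P)$, so your assignment is not even well defined --- and the discrepancy is $1$, not a multiple of $m$, so reducing mod $m$ does not rescue it. Your verification fails to notice this only because you misdescribe the reconnection as matching ``an `above' end of one copy to an `above' end of another'': an oriented splice must join an incoming end to an outgoing end, never two outgoing ends.

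The fix is exactly the paper's proof: give copy $D^k$ the numbering $f^k = f + k$, shifted by $k$. Then the incoming end at $p^k_-$ carries $a+k$ while the outgoing end at $p^{k-\epsilon(p)}_+$ carries $(a+\epsilon(p)) + (k-\epsilon(p)) = a+k$, so the jump of $f$ at the cut point is cancelled by the shift in the copy index. The only junction where the integer values disagree is the cyclic wrap-around between copies $0$ and $m-1$, where they differ by $m$; that is the one place where reduction mod $m$ is genuinely needed, as you correctly anticipated. With that correction the rest of your argument (classical crossings inherit the condition from $f$ within a single copy; virtual crossings and the inserted virtual paths impose no constraint) is sound and coincides with the paper's proof.
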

\begin{proof}
Let $f$ be an Alexander numbering of $(D,P)$.  
For each $k \in \{0, \dots, m-1\}$, let $f^k$ denote the Alexander numbering of $(D^k, P^k)$ obtained from $f$ by shifting $k$.  As shown in Figure~\ref{fgAlexcovprfp}, the Alexander numberings $f_0, \dots, f_{m-1}$ induce 
a mod $m$ Alexander numbering  of $\varphi_m(D, P)$.   For example, see Figures~\ref{fgAlexcov1p} and~\ref{fgExAlexcov1p}. 
\end{proof}
\begin{figure}[h]
\centerline{
\includegraphics[width=8cm]{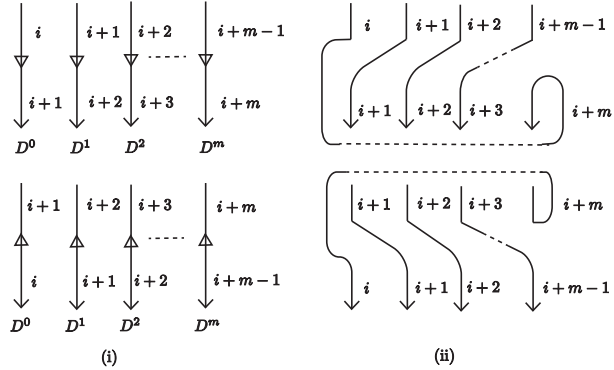}}
\caption{Alexander numbering of a cyclic covering virtual link diagram}\label{fgAlexcovprfp}
\end{figure}

%
%

\section{The main theorem} \label{sect:main}

In Section~\ref{sect:CyclicCover1}, we introduced an $m$-fold cyclic covering diagram $\varphi_m(D,P)$ for a virtual link diagram $D$ with  a cut system $P$.  In this section, 
we first show that $\varphi_m(D,P)$, up to strong equivalence, does not depend on $P$ (Lemma~\ref{lem2}).  Hence we may denote it by $\varphi_m(D)$.  
Our main theorem is that if $D$ and $D'$ are equivalent then 
$\varphi_m(D,P)$ and $\varphi_m(D',P')$ are equivalent (Theorem~\ref{thm2}).  This implies that we have a map $\varphi_m$ from the set of virtual links to the set of mod $m$ almost classical virtual links.

\begin{lem}\label{lem2}
Let $D$ be a virtual  link diagram, and let $P_1$ and  $P_2$ be cut systems of $D$.
Then $\varphi_m(D,P_1)$ and $\varphi_m(D,P_2)$ are strongly equivalent.
\end{lem}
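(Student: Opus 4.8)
The plan is to reduce everything to a single oriented cut point move and then to check, move by move, that the construction of Section~\ref{sect:CyclicCover1} is unchanged up to strong equivalence. By Theorem~\ref{thm:cutpointmove}, the cut systems $P_1$ and $P_2$ of $D$ are connected by a finite sequence of oriented cut point moves of types I, II, III, and III$'$ (Figure~\ref{fgOrientedCutmv}), and move III$'$ is a consequence of move III and moves II. Since strong equivalence is an equivalence relation, it suffices to prove: if a cut system $P'$ of $D$ is obtained from a cut system $P$ by one oriented cut point move, then $\varphi_m(D,P)$ and $\varphi_m(D,P')$ are strongly equivalent. Here I regard $\varphi_m(D,P)$ as defined up to strong equivalence; that the auxiliary isotopy of $\mathbb{R}^2$ used in the construction does not affect this class can be checked directly by detour moves (it also follows from Theorem~\ref{thm:General}), so we are free to place $(D,P)$ and $(D,P')$ in any convenient position.

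The construction is local to the neighbourhoods $N(\ell(p))$, $p\in P$: outside $\bigcup_{p\in P}N(\ell(p))$ the diagram $\varphi_m(D,P)$ is just the union $D^0\cup\cdots\cup D^{m-1}$ of the $m$ horizontal copies of $D$, while inside $N(\ell(p))$ it is the standard reconnection tangle $T_p$ of Figure~\ref{fgAlexcov2p} inserted on the $m$ strands that meet $\ell(p)$. The tangle $T_p$ consists only of virtual crossings together with one virtual path, so up to detour moves it is determined by the pairing it induces on its $2m$ endpoints, which is a cyclic permutation generating $\mathbb{Z}_m$ when $p$ is coherent and the inverse permutation when $p$ is incoherent. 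An oriented cut point move takes place inside a disk $B\subset\mathbb{R}^2$; its $m$ horizontal copies $B^0,\dots,B^{m-1}$ undergo the same modification, and the only horizontal lines that change are those through $B$. Hence it is enough to understand the effect of each move on the part of $\varphi_m$ lying over $B$ and its copies, and in each case the change should be absorbed by detour moves.

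Then I would treat the moves in turn. For \emph{move I} (sliding a cut point $p$ along a strand past a virtual crossing, without changing its coherence), the tangle $T_p$ sits on the $m$ copies of the strand on one side of the $m$ copies of the virtual crossing in $\varphi_m(D,P)$ and on the other side in $\varphi_m(D,P')$; since $T_p$ is purely virtual, it is pushed past the $m$ virtual crossings by detour moves. For \emph{move II} (cancelling an adjacent pair $\{p,q\}$ of a coherent and an incoherent cut point on one semi-arc), the two cut points induce mutually inverse permutations, so $T_p$ and $T_q$ are inverse virtual tangles; rearranging all the virtual crossings involved by detour moves until $T_p$ and $T_q$ are adjacent, they cancel, leaving $D^0\cup\cdots\cup D^{m-1}$ near $B$, which is $\varphi_m(D,P')$ there (creating such a pair is the reverse). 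For \emph{move III}, and hence \emph{III$'$} modulo moves II, the relevant cut points lie on the semi-arcs incident to a single classical crossing, and the corresponding reconnection tangles appear on the $m$ copies of those semi-arcs around the $m$ copies of that crossing; because the cut points are arranged so that the total permutation they produce on the incoming strands equals the one on the outgoing strands, the tangles can be pushed through the $m$ parallel copies of the classical crossing by detour moves and cancelled in pairs, again leaving $D^0\cup\cdots\cup D^{m-1}$ near $B$.

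\textbf{Main obstacle.} The genuinely non-formal step is move III, the only one involving a classical crossing: I must verify carefully that the purely virtual reconnection tangles really can be slid past the $m$ parallel copies of the classical crossing (this is legitimate, since a detour move may carry a virtual path across a classical crossing) and that the combinatorics of the cut points around the crossing makes the resulting tangles cancel rather than accumulate. Tracking which strand of which copy is joined to which after passing the crossing is the delicate bookkeeping; once it is settled, everything reduces, as above, to routine manipulations by detour moves together with the fact that a virtual tangle is determined up to strong equivalence by its endpoint pairing.
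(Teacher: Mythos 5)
Your proposal is correct and follows essentially the same route as the paper: the paper's proof of Lemma~\ref{lem2} likewise reduces (via Theorem~\ref{thm:cutpointmove}) to a single oriented cut point move and checks case by case, in Figure~\ref{fgcmovprf}, that the two covering diagrams are related by detour moves. Your reformulation in terms of endpoint pairings of the purely virtual reconnection tangles --- including the cancellation of the tangles pushed through the $m$ parallel copies of a classical crossing in the move III case --- is just an explicit combinatorial rendering of what the paper establishes pictorially.
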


\begin{proof}
Suppose that $P_1$ and $P_2$ are as in the left part of Figure~\ref{fgcmovprf}. Then $\varphi_m(D, P_1)$ and $\varphi_m(D, P_2)$ are as in the right part of the figure, which are   
related by detour moves.  The other cases of oriented cut moves are shown by a similar argument. 
\end{proof}

\begin{figure}[h]
\centerline{
 \includegraphics[width=14cm]{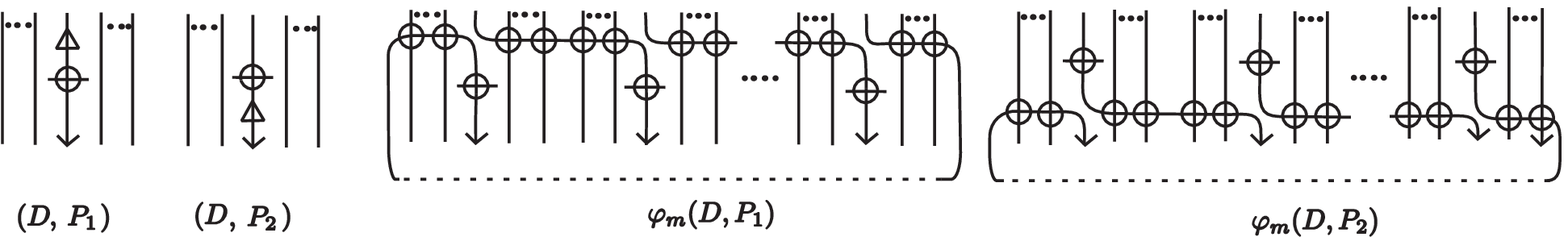}
}
\centerline{(i)}

\vspace{.2cm}
\centerline{
 \includegraphics[width=14cm]{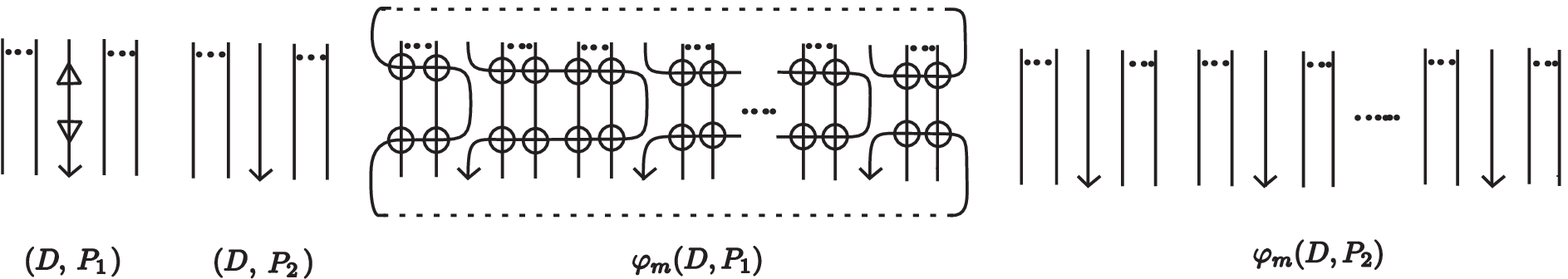}
}
\centerline{(ii)}

\vspace{.2cm}
\centerline{
 \includegraphics[width=14cm]{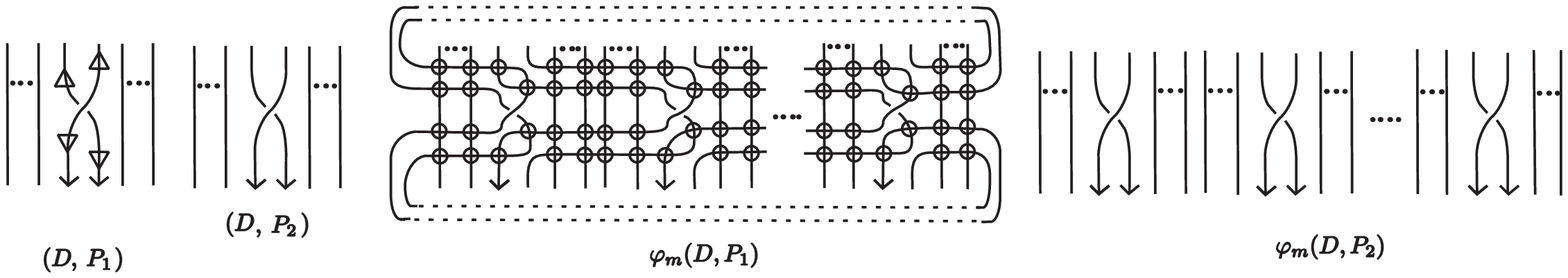}
}
\centerline{(iii)}

\vspace{.2cm}
\caption{Results by an oriented cut point move}\label{fgcmovprf}
\end{figure}

The following is our main theorem. It implies that we have a map $\varphi_m$ from the set of virtual links to the set of mod $m$ almost classical virtual links. 
\begin{thm}\label{thm2}
Let $(D,P)$ and $(D',P')$ be virtual link diagrams with cut systems. If $D$ and $D'$ are equivalent, 
then   $\varphi_m(D,P)$ and $\varphi_m(D',P')$ are equivalent. 
\end{thm}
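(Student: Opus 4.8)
The plan is to reduce the theorem to the case where $D$ and $D'$ differ by a single generalized Reidemeister move, since equivalence is generated by a finite sequence of such moves together with ambient isotopies of $\mathbb{R}^2$. Isotopies of $\mathbb{R}^2$ are harmless: they only change the positions of cut points and the horizontal lines $\ell(p)$, hence by the discussion following the construction (or by the promised Theorem~\ref{thm:General}) they alter $\varphi_m(D,P)$ only up to strong equivalence. By Lemma~\ref{lem2} the strong equivalence class of $\varphi_m(D,P)$ is independent of the choice of cut system $P$, so it suffices to exhibit, for each generalized Reidemeister move $D \leftrightarrow D'$, \emph{some} convenient cut systems $P$ of $D$ and $P'$ of $D'$ for which $\varphi_m(D,P)$ and $\varphi_m(D',P')$ are related by generalized Reidemeister moves and detour moves.

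The key step is a local analysis, move by move. For each of the three classical Reidemeister moves I, II, III, I would choose $P$ and $P'$ to be the canonical cut systems near the crossings involved (recall that the canonical cut system places two oriented cut points around every classical crossing, as in Figure~\ref{fg:canocutsys}), so that outside a disk containing the move the two cut systems agree and the cyclic covering construction is performed identically. Inside the disk, $\varphi_m$ replaces the $m$ parallel copies of the local tangle-with-cut-points by the ``cabled'' picture obtained from the local replacement rule of Figure~\ref{fgAlexcov2p}. One then checks directly that the two local cabled tangles are related by the corresponding classical Reidemeister moves applied to the $m$ strands (each of the $m$ sheets undergoes the same move, and the newly introduced virtual connector arcs can be slid out of the way by detour moves). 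For the virtual Reidemeister moves and the detour move, the situation is even easier: a virtual path lifts to $m$ virtual paths, each replacement introduces only virtual crossings, and the whole local change is realized by detour moves, hence $\varphi_m(D,P)$ and $\varphi_m(D',P')$ are in fact strongly equivalent in those cases.

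The only genuine subtlety — and the step I expect to be the main obstacle — is bookkeeping at the crossing for the classical moves: when a cut point passes through a crossing, or when the horizontal lines $\ell(p)$ of different cut points must be reordered to accommodate a move, the indices of the $m$ sheets get cyclically permuted by the connector arcs, and one must verify that after performing the move on all $m$ sheets the cyclic gluing data matches up. Concretely, each cut point $p$ contributes, in $\varphi_m(D,P)$, a ``shift by one sheet'' along the $m$ copies (this is exactly what makes $\varphi_m(D,P)$ carry a mod $m$ Alexander numbering, by Proposition~\ref{thm1}); so when several cut points are involved in a move one must track the composite cyclic permutation and confirm it is unchanged by the move. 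This is precisely where Lemma~\ref{lem2} is used repeatedly: before and after the move we are free to normalize the cut system to the canonical one, which fixes the permutation data in a standard form and makes the comparison routine. Once all the local cases are dispatched, the theorem follows by concatenating the local equivalences along the given sequence of moves connecting $D$ to $D'$.
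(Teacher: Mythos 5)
Your proposal is correct and follows essentially the same route as the paper: reduce to a single generalized Reidemeister move, use Lemma~\ref{lem2} to normalize to canonical (or otherwise convenient) cut systems, observe that the $m$ parallel sheets undergo $m$ copies of the corresponding move, and absorb the virtual connector arcs by detour moves. The paper's proof is exactly this case-by-case check, with the cut-point adjustments for the virtual Reidemeister moves spelled out via oriented cut point moves before comparing the coverings.
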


\begin{proof}
By Lemma~\ref{lem2}, it is sufficient to consider the case that $P$ and $P'$ are canonical cut systems.  

If $D'$ is related to $D$ by one of Reidemeister moves, then $\varphi_m(D, P)$ and $\varphi_m(D',P')$ are related by $m$ Reidemeister moves, which are copies of the original Reidemsiter moves.  

Suppose that $D'$ is related to $D$  by a virtual Reidemeister move I  (resp. II) as 
 in Figure~\ref{fgovprfv} (i) (resp. (ii)).  
 Let $P_*$ be the cut system obtained from $P$ by cut point moves I and II as in the figure. 
By Lemme~\ref{lem2},  $\varphi_m(D,P)$ and $\varphi_m(D,P_*)$ are equivalent.
On the other hand $\varphi_m(D',P')$ and $\varphi_m(D,P_*)$ are related by $m$ virtual Reidemeister moves I (resp. II). Thus $\varphi_m(D,P)$ and  $\varphi_m(D',P')$ are equivalent.

Suppose that  $D'$ is related to $D$  by a virtual Reidemeister move III as in Figure~\ref{fgovprfv} (iii).   
Let $P_*$ (resp. $P'_*$) be the cut system obtained from $P$ (resp. $P'$) by cut point moves as in the figure. 
By Lemme~\ref{lem2},  $\varphi_m(D,P)$  (resp. $\varphi_m(D',P')$ ) and $\varphi_m(D,P_*)$ (resp. $\varphi_m(D',P'_*)$) are equivalent.
On the other hand, $\varphi_m(D,P_*)$ and $\varphi_m(D',P'_*)$ are  related by $m$ virtual Reidemeister moves III. Thus $\varphi_m(D,P)$ and  $\varphi_m(D',P')$ are equivalent.

Suppose that  $D'$ is related to $D$  by a virtual Reidemeister move IV as in Figure~\ref{fgovprfv}  (iv). 
Let $P_*$ (resp. $P'_*$) be the cut system obtained from $P$ (resp. $P'$) by cut point moves as in the figure. By Lemme~\ref{lem2},  $\varphi_m(D,P)$  (resp. $\varphi_m(D',P')$ ) and $\varphi_m(D,P_*)$ (resp. $\varphi_m(D',P'_*)$) are equivalent.
On the other hand, 
$\varphi_m(D,P_*)$  and $\varphi_m(D',P'_*)$ are equivalent by $m$ virtual Reidemeister moves IV. Thus $\varphi_m(D, P)$ and  $\varphi_m(D', P')$ are equivalent. 
The other cases where the orientations of virtual link diagrams are different are shown by a similar argument. 
 \vspace{0.3cm}
\begin{figure}[h]
\centering{
\begin{tabular}{ccc}
\includegraphics[width=4.cm]{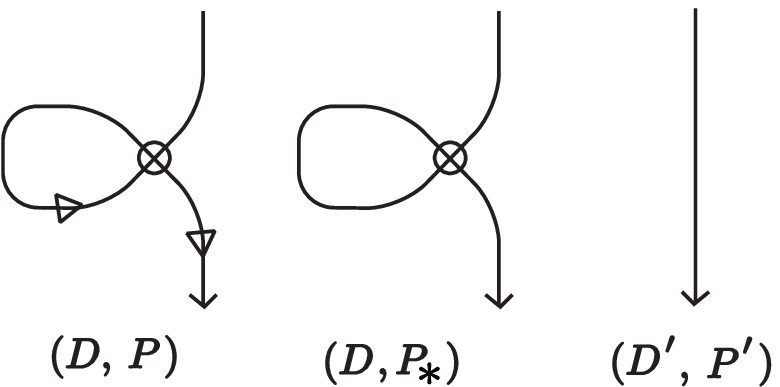}&\phantom{MMM}&\includegraphics[width=8.cm]{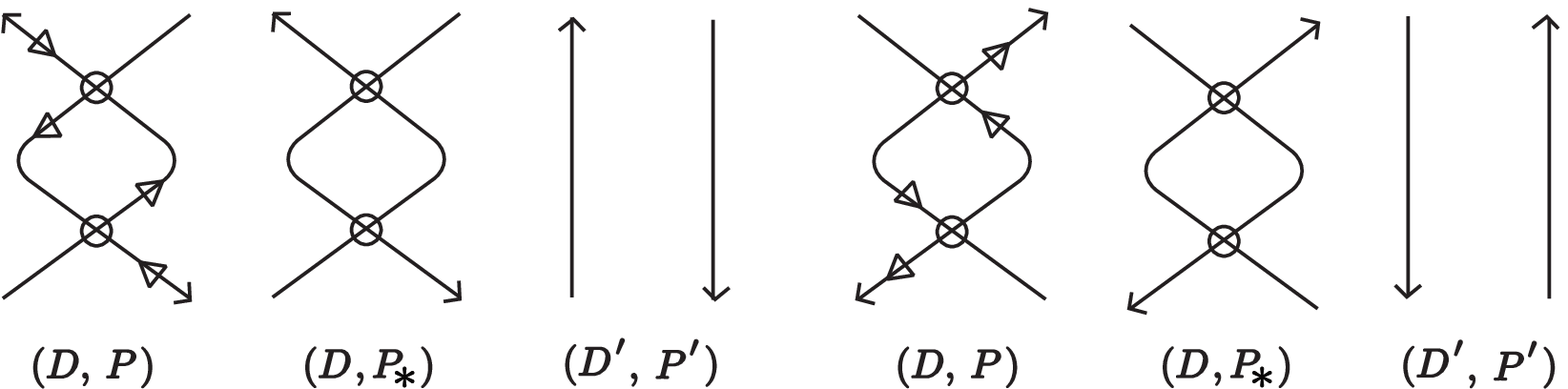}\\
{\small Virtual Reidemeister move I }&\phantom{MMM}&{\small Virtual Reidemeister move II}\\
(i)&\phantom{MMM}&(ii)\\
\end{tabular}
\begin{tabular}{ccc}
\includegraphics[width=6.cm]{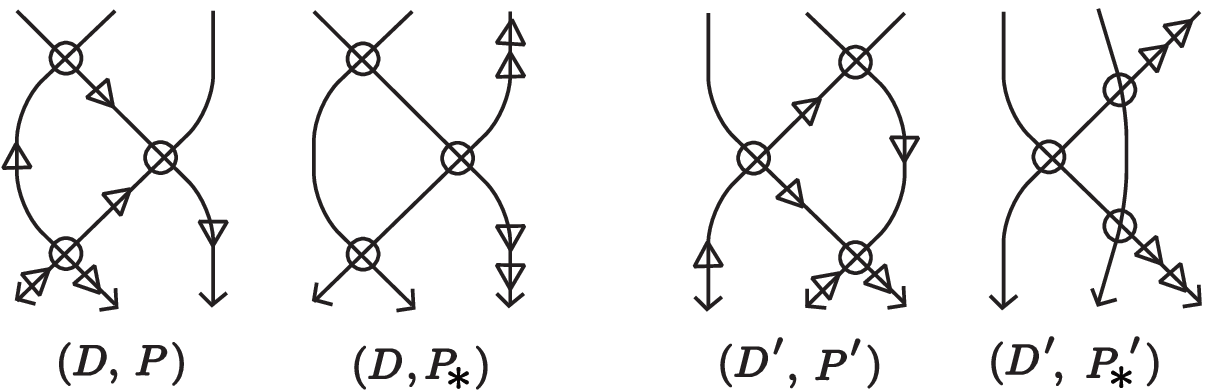}&\phantom{MMM}&\includegraphics[width=6cm]{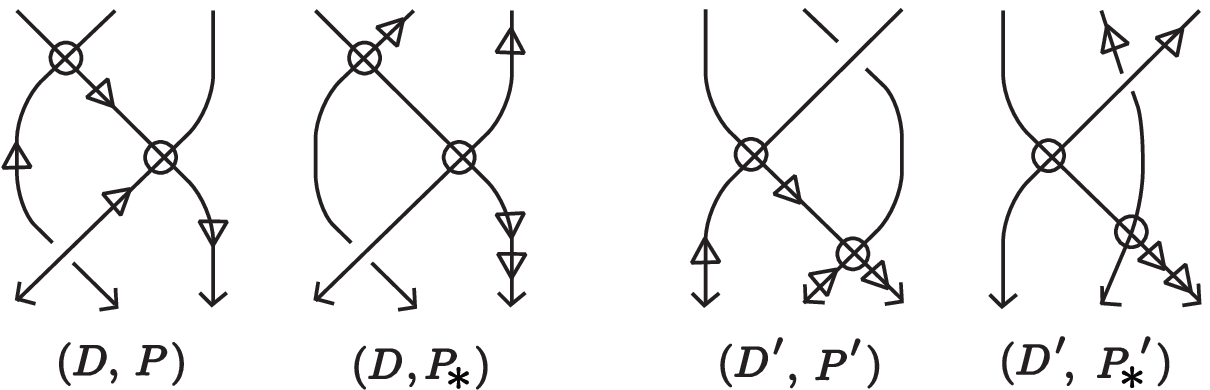}\\
{\small Virtual Reidemeister move III}&\phantom{MMM}&{\small Virtual Reidemeister move IV}\\
(iii)&\phantom{MMM}&(iv)\\
\end{tabular}}
\caption{Diagrams related by a virtual Reidemeister move}\label{fgovprfv}
\end{figure}
%
\end{proof}

\section{An alternative construction of cyclic covering virtual link diagrams}
\label{sect:CyclicCover2}

In this section, we introduce two methods of constructing cyclic covering virtual link diagrams. 
The first one is a more general method, denoted by $\varphi_m^0(D,P)$,  including the method introduced in Section~\ref{sect:CyclicCover1} as a special case.  The second one is a method which is also a special case of the first one.  
The reader who does not need it might skip this section. 

In the construction of $\varphi_m(D,P)$ introduced in Section~\ref{sect:CyclicCover1}, we first modified $(D,P)$ so that 
each horizontal line $\ell(p)$ through $p \in P$ intersects $D$ transversely avoiding the crossings of $D$ and the other cut points of $P$, and then we considered $m$ parallel copies of $(D,P)$.  However, we may define $\varphi_m(D,P)$ without this procedure.  

Let $(D,P)$ be a virtual link diagram with a cut system.  
Let $(D^k, P^k)$, $k=0, \dots, m-1$, be virtual link diagrams with cut systems such that 
each $(D^k, P^k)$ is a copy of $(D,P)$ and that the intersection of $D^k$ and $D^{k'}$ for $k \neq k'$ is empty or consists of virtual crossings. 
(Furthermore, we may weaken the assumption that $(D^k, P^k)$ is a copy of $(D,P)$ so that $(D^k, P^k)$ is isotopic to $(D,P)$ by an isotopy of ${\mathbb R}^2$ or even that $(D^k, P^k)$ is strongly equivalent to $(D,P)$.) 
For each $p \in P$, let $N(p)$ be a regular neighborhood of $p$ in $D$, which is a small arc on $D$ containing $p$. Let $p_-$ and $p_+$ be the endpoints of $N(p)$ such that the orientation of the virtual link diagram restricted to $N(p)$ is from $p_-$ to $p_+$.    
For each $k \in \{0, \dots, m-1\} = {\mathbb Z}_m$, let $p^k$, $N(p^k)$, $p^k_-$ and $p^k_+$ be the corresponding copy of 
$p$, $N(p)$, $p_-$ and $p_+$ in $D^k$.   
Remove $N(p^k)$ for all $p \in P$ and $k \in \{0, \dots, m-1\}$ from the diagram $\cup_{k=0}^{m-1}D^k$ and, for each $p \in P$ and $k \in \{0, \dots, m-1\}$, connect the endpoint $p^k_-$ to $p^{k-\epsilon(p)}_+$ by any virtual path.   Here $\epsilon(p)$ is $+1$ (resp. $-1$) if $p$ is coherent (resp. incoherent).  
We denote by $\varphi_m^0(D,P)$ a virtual link diagram obtained this way.  

Consider an Alexander numbering $f$ of $(D,P)$ and let $f^k$ be the Alexander numbering of $(D^k, P^k)$ obtained from $f$ by shifting by $k$.  Then $f^0, \dots, f^{m-1}$ induce a mod $m$ Alexander numbering of $\varphi_m^0(D,P)$.  Thus $\varphi_m^0(D,P)$  is mod $m$ almost classical. 

The method of construction of $\varphi_m(D,P)$ introduced in Section~\ref{sect:CyclicCover1} is a special case of the construction of $\varphi_m^0(D,P)$.  

\begin{thm}\label{thm:General}
For a virtual link diagram $D$ with a cut point $P$, a diagram $\varphi_m^0(D,P)$ is unique up to strong equivalence. 
\end{thm}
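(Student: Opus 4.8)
The plan is to show that any two virtual link diagrams produced by the construction of $\varphi_m^0(D,P)$ are related by a finite sequence of detour moves and planar isotopies of $\mathbb{R}^2$; by the characterization of strong equivalence recalled in Section~\ref{secnormal} this is exactly what has to be proved. The construction depends on three kinds of choices, which I would dispose of in order of increasing difficulty: the connecting virtual paths; the regular neighborhoods $N(p)$, together with the precise positions of the cut points of $P$ along their semi-arcs; and the $m$ copies $(D^k,P^k)$ together with their placement in the plane.

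With the copies and the neighborhoods $N(p^k)$ fixed, replacing one connecting virtual path joining $p^k_-$ to $p^{k-\epsilon(p)}_+$ by another virtual path with the same two endpoints is, after a generic perturbation making the new path meet the rest of the diagram only in virtual crossings, precisely a detour move; hence the output is independent of the connecting paths up to detour moves. Next suppose $N'(p)\subseteq N(p)$ for every $p$. Then the $N'$-construction differs from the $N$-construction only in that, at each attaching point, it keeps a short stump of the corresponding copy (the neighborhoods being small, each such stump is free of classical crossings); concatenating a connecting path with the two stumps that abut its endpoints produces a virtual path with the same endpoints as the connecting path used in the $N$-construction, so by the previous paragraph the two outputs are strongly equivalent. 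Since any two admissible neighborhoods of $p$ contain their intersection, this settles the dependence on the $N(p)$; and the same ``absorb a stump'' argument shows that sliding a cut point along its semi-arc --- an oriented cut point move of type~I applied to $P$ --- likewise changes $\varphi_m^0(D,P)$ only by detour moves.

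It remains to deal with the copies. When the $(D^k,P^k)$ are exact translated copies of $(D,P)$, I would join any two placements by a generic isotopy of the configuration and observe that $\varphi_m^0(D,P)$ then changes only by planar isotopy and, at isolated instants, by pushing a single virtual path --- a connecting arc, or a sub-arc of one copy lying between consecutive classical crossings --- past another part of the diagram, i.e.\ by a detour move; so all outputs are strongly equivalent to one fixed standard diagram. Finally, in the stated generality where $(D^k,P^k)$ is only isotopic, or even only strongly equivalent, to $(D,P)$, I would morph each copy in turn --- inside the ambient diagram --- to a standard translated copy by a sequence of detour moves and oriented cut point moves. A detour move on a copy that is supported in a disk meeting neither the removed neighborhoods nor the connecting paths is literally a detour move on $\varphi_m^0(D,P)$; a strand obstructed by a cut point can first be cleared by a type~I cut point move (handled above) and restored afterwards; and oriented cut point moves of types~II, III, III$'$ are checked directly to induce detour moves on $\varphi_m^0(D,P)$ --- for instance a type~II move only splices a short crossing-free excursion into a neighbouring copy, which a detour move undoes.

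I expect the main obstacle to be exactly the bookkeeping in this last step: verifying in each local picture that the moves realizing the equivalence of a single copy can be localized away from the removed neighborhoods and the connecting paths --- or, when they cannot, that the interference is cancelled by a detour move --- and that pushing one copy past another, or past a connecting arc, really decomposes into detour moves and never requires a forbidden move. Once these pictures are drawn, each step is routine; assembling the complete case analysis, including the cases where the orientations of the copies differ, is where the actual work lies.
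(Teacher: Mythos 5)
Your strategy is sound but genuinely different from the paper's, and noticeably more laborious. You normalize the choices one at a time (connecting paths, neighborhoods, placement of the copies) by a dynamic argument, following a generic isotopy or a sequence of detour moves on a single copy and checking at each instant that the ambient diagram changes only by detour moves; the hard residue, as you say yourself, is the case analysis in the last step. The paper instead makes a single static comparison of two arbitrary outputs $D_1$ and $D_2$: both have classical crossings canonically labelled $c^k$, so after an isotopy they can be assumed to coincide near all classical crossings, and the complement of those neighborhoods consists entirely of virtual paths; the only thing to check is that corresponding arcs of $D_1$ and $D_2$ have the same terminal point, which follows from the observation that an arc starting at level $k$ ends at level $k'$ with $k'-k=\sum(-\epsilon(p))$ over the cut points $p$ on the corresponding arc of $D$ --- a quantity determined by $(D,P)$ alone. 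One wholesale application of detour moves then converts $D_1$ into $D_2$. That endpoint-matching computation is the key idea missing from your write-up, and it is exactly what lets the paper avoid the bookkeeping you flag as the main obstacle; it also handles the fully general case (copies only strongly equivalent to $(D,P)$, arbitrary placements, arbitrary connecting paths) uniformly rather than by separate reductions. One small correction to your text: sliding a cut point along its semi-arc is just an isotopy of $(D,P)$, not the oriented cut point move of type I, which passes a cut point through a virtual crossing.
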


\begin{proof}
Let $D_1$ and $D_2$ be virtual link diagrams obtained from the same $(D, P)$ by the construction for $\varphi_m^0(D,P)$ introduced above.  
By definition of $\varphi_m^0(D,P)$, 
every classical crossing of $D_1$ (or $D_2$) can be labelled uniquely with $c^k$ for a classical crossing $c$ of $D$ and $k \in \{0, \dots, m-1\}$.  
Thus there is a natural bijection between the classical crossings of $D_1$ and those of $D_2$.  
By an ambient isotopy of ${\mathbb R}^2$, we may assume that $D_1$ and $D_2$ coincide in a regular neighborhood of every classical crossing.  Let $E$ denote the closure of the complement  of the regular neighborhoods of all classical crossings of $D_1$ (or of $D_2$) in  ${\mathbb R}^2$. 
The intersection $D_1 \cap E$ (or $D_2 \cap E$) consists of virtual paths which are 
properly immersed arcs or immersed loops in $E$.  

Let $A(D_1)$ (resp. $A(D_2)$) be the set of properly immersed arcs of $D_1 \cap E$ (resp. $D_2 \cap E$), and 
let $L(D_1)$ (resp. $L(D_2)$) be the set of immersed loops of $D_1 \cap E$ (resp. $D_2 \cap E)$.  

Let $a_1 \in A(D_1)$ and $a_2 \in A(D_2)$ be virtual paths starting with the same point 
$a_1(0)=a_2(0)$ 
in $\partial (D_1\cap E)= \partial (D_2\cap E)$, and let $a_1(1)$ and $a_2(1)$ be their terminal points in 
$\partial (D_1\cap E)= \partial (D_2\cap E)$.  We assert that $a_1(1) = a_2(1)$.  This is seen as follows. Let 
$E(D)$ be the complement of the regular neighborhoods of all classical crossings of $D$ in ${\mathbb R}^2$. 
The intersection $D \cap E(D)$ consists of virtual paths which are properly immersed arcs or immersed loops in $E(D)$. 
Let $a(0)$ be a point of $\partial (D \cap E(D))$ corresponding to $a_1(0)$ and let $a$ be the virtual path of $D \cap E(D)$ starting at $a(0)$.  Let $a(1)$ be the terminal point of $a$ in $\partial (D \cap E(D))$.  Then $a_1(0) = a_2(0) = a(0)^k$, 
$a_1(1)= a(1)^{k'}$ and $a_2(1) = a(1)^{k''}$ for some $k, k', k'' \in \{0, \dots, m-1\} = {\mathbb Z}_m$.  
Note that $k' -k$ is the sum of $-\epsilon(p)$ for all cut points $p \in P$ appearing on $a$, and so is 
 $k'' -k$.  Thus,  we see that $k' =k''$ and $a_1(1) = a_2(1)$.  Therefore, there is a bijection between 
 $A(D_1)$ and $A(D_2)$ such that corresponding arcs $a_1 \in A(D_1)$ and $a_2 \in A(D_2)$ have the same starting point and the same terminal point.  
 
Every loop of $L(D_1)$ (or $L(D_2)$) can be labelled as $\ell^k$ for a virtual path being an immersed loop $\ell$ of $D$ and $k \in \{0, \dots, m-1\}$.  Thus there is a bijection between $L(D_1)$ and $L(D_2)$.  

By detour moves,  replace virtual paths which are elements of $A(D_1)$ and $L(D_1)$ with 
the corresponding elements of $A(D_2)$ and $L(D_2)$, and we can obtain $D_2$ from  $D_1$.  This implies that  $D_1$ and $D_2$ are strongly equivalent. 
\end{proof}

We introduce another  method of construction of cyclic covering virtual link diagrams, which is a special case of  the method above. 
Let $(D, P)$ be a virtual link diagram with a cut system. 
Put $m$ copies of  $(D,P)$ in ${\mathbb R}^2$, say $(D^0, P^0), \dots, (D^{m-1}, P^{m-1})$,  
  such that all corresponding semi-arcs are in parallel as in Figure~\ref{fgAlexcov2p23} and all crossings between $D^k$ and $D^{k'}$  for $k \ne k'$ are virtual crossings.  Here semi-arcs of $D^{k+1}$ appears on the right of $D^{k}$ with respect to the orientation of $D$ as in Figure~\ref{fgAlexcov2p23}. See Figure~\ref{fgAlexcov1p2} (i) and (ii) for an example with $m=3$. 

\vspace{0.3cm}
\begin{figure}[h]
\centerline{
\includegraphics[width=4.cm]{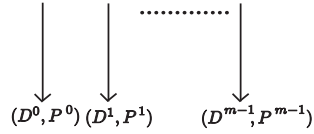}}
\caption{Parallel virtual link diagrams}\label{fgAlexcov2p23}
\end{figure}
\begin{figure}[h]
\centerline{
\includegraphics[width=12.cm]{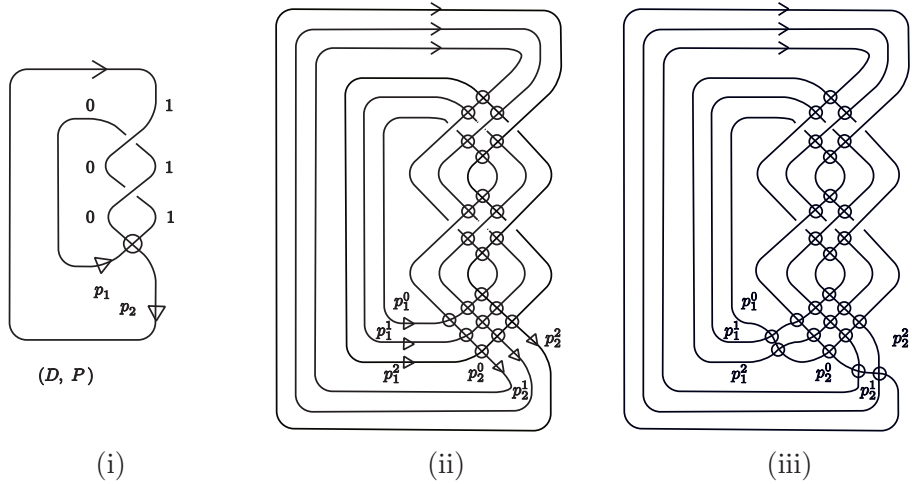}}
\centerline{(i)\hspace{4cm}(ii)\hspace{4cm}(iii)}
\caption{A $3$-fold cyclic covering virtual link diagram}\label{fgAlexcov1p2}
\end{figure}

For a cut point $p \in P$, let $p^k$ denote the corresponding cut point of $P^k$. 
Remove regular neighborhoods of all $p^k$  for $p \in P$ and $k \in \{0, \dots, m-1\} = {\mathbb Z}_m$ 
from $\cup_{k=0}^{m-1} D^k$, and connect the endpoints by virtual paths 
 as in Figure~\ref{fgAlexcov2p24} (iii) (resp. (iv)) if the cut point is coherent (resp. incoherent) as in 
 Figure~\ref{fgAlexcov2p24} (i) (resp. (ii)). 

\vspace{0.3cm}
\begin{figure}[h]
\centerline{
\includegraphics[width=12.cm]{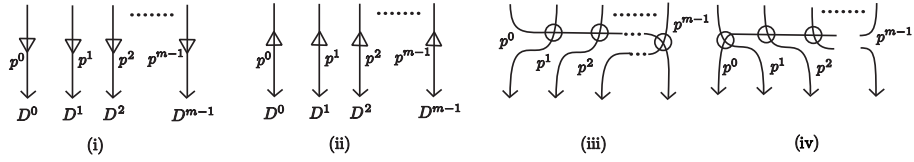}}
\caption{Replacement of neighborhoods of  cut points}\label{fgAlexcov2p24}
\end{figure}

Then we obtain a virtual link diagram.  Let us denote it by $\varphi_m^1(D,P)$.  
See Figure~\ref{fgAlexcov1p2} (iii) for an example. This concrete construction is also a special case of the general construction $\varphi^0_m(D,P)$.  
By Theorem~\ref{thm:General}, $\varphi_m(D,P)$, $\varphi_m^1(D,P)$ and $\varphi^0_m(D,P)$ are 
all strongly equivalent.  We call them {\it cyclic covering (virtual link) diagrams}.  

From this construction we see the following.
\begin{cor}\label{thmtwistknot}
Let $(D,P)$ be a virtual knot diagram with cut system. Then $\varphi_m(D,P)$ is an $m$-component virtual link diagram.
\end{cor}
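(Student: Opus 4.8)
The plan is to use the alternative construction $\varphi_m^1(D,P)$ (or equivalently $\varphi_m^0(D,P)$), which by Theorem~\ref{thm:General} is strongly equivalent to $\varphi_m(D,P)$, and to track carefully how the single closed component of the knot diagram $D$ breaks up into components after the cut-point surgeries. First I would recall that $D$, being a virtual knot diagram, is a single immersed circle in $\mathbb{R}^2$; the $m$ copies $D^0,\dots,D^{m-1}$ are $m$ disjoint immersed circles (all crossings between distinct copies being virtual). The construction removes, for each cut point $p\in P$ and each $k$, a small arc $N(p^k)$ from $D^k$, and then reconnects $p^k_-$ to $p^{k-\epsilon(p)}_+$ by a virtual path. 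So the resulting $1$-manifold $\varphi_m^0(D,P)$ is obtained from the disjoint union of $m$ circles by cutting and regluing, and I need to count its connected components.

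The key combinatorial step is to model this by a permutation. Orient $D$ and list the cut points of $P$ in the cyclic order $p_1,p_2,\dots,p_r$ in which they are encountered while traversing $D$ once. Traversing the copy $D^k$ starting just after some fixed basepoint, each time we pass a (former) cut point $p_j^k$ we jump from level $k$ to level $k-\epsilon(p_j)\pmod m$ and continue along that copy. Hence after one full loop around the underlying circle $D$ starting at level $k$, we return to level $k + s \pmod m$, where $s = -\sum_{j=1}^r \epsilon(p_j)$ is (minus) the signed count of cut points, i.e. the number of incoherent minus the number of coherent cut points of $P$. By Corollary~\ref{cor:numbers}, the number of coherent cut points equals the number of incoherent ones, so $s \equiv 0 \pmod m$; thus after a full traversal we are back at the \emph{same} level $k$. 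This shows that the component of $\varphi_m^0(D,P)$ containing the arcs of $D^k$ returns to $D^k$ and closes up there, so it is a single circle meeting each level exactly once — and the $m$ levels give $m$ distinct components. Concretely, with the $\varphi_m^1$ picture of Figure~\ref{fgAlexcov2p24}, one checks directly that at each surgery the strand on level $k$ is rerouted onto level $k\mp 1$, and the "parallel copies" bookkeeping makes the level-shift argument transparent.

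I would then assemble these observations: each connected component of $\varphi_m^0(D,P)$ corresponds to an orbit of the map $k \mapsto k$ on $\mathbb{Z}_m$ obtained by going once around $D$ (more precisely, one must also argue that within one traversal at a fixed level we do not close up prematurely — but that is immediate, since along a single copy $D^k$ between consecutive cut points the strand is just an arc of $D^k$, and the only way to close up is to return to the starting point of the traversal, which by the level computation happens exactly after one full loop at the original level). Since that map is the identity on $\mathbb{Z}_m$, there are exactly $m$ orbits, hence $m$ components. Therefore $\varphi_m(D,P)$, being strongly equivalent to $\varphi_m^0(D,P)$, has $m$ components, i.e. it is an $m$-component virtual link diagram.

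The main obstacle I anticipate is purely a matter of careful bookkeeping rather than a deep difficulty: one must be precise that the "level-shifting" rule at a cut point is exactly $k \mapsto k - \epsilon(p)$ (consistent with the reconnection $p^k_- \to p^{k-\epsilon(p)}_+$ in the definition of $\varphi_m^0$, and with Figure~\ref{fgAlexcov2p24}), and that traversing the underlying circle $D$ exactly once corresponds to applying this rule once for each cut point in $P$. Once that is pinned down, the fact that $\sum_p \epsilon(p) = 0$ (Corollary~\ref{cor:numbers}) does all the work. I would present the argument using the $\varphi_m^1$ construction so that the "parallel copies, shift to the right at each crossing" picture makes the counting visually evident, and invoke Theorem~\ref{thm:General} at the end to transfer the conclusion to $\varphi_m(D,P)$.
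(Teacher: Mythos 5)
Your proof is correct and follows essentially the same route as the paper: work with $\varphi_m^1(D,P)$ (justified by Theorem~\ref{thm:General}) and use Corollary~\ref{cor:numbers} to conclude that the net level shift after one traversal of the underlying knot is zero, hence there are $m$ components. The paper states this more tersely (the number of twists of type Figure~\ref{fgAlexcov2p24}~(iii) equals that of type (iv)); your monodromy-permutation bookkeeping just makes the same counting explicit.
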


\begin{proof}
Consider $\varphi_m^1(D,P)$.  
Since the  number of coherent cut points of $P$ equals that of incoherent cut points of $P$ 
(Corollary~\ref{cor:numbers}),  
the number of twists as in Figure~\ref{fgAlexcov2p24} (iii) appearing in $\varphi_m^1(D,P)$ 
equals that of the opposite twists as in Figure~\ref{fgAlexcov2p24} (iv).  Thus $\varphi_m^1(D,P)$ is  an $m$-component virtual link diagram, and so is $\varphi_m(D,P)$. 
\end{proof}

\section{Applications}\label{application}

First, we demonstrate how Theorem~\ref{thm2} is used to show that 
two virtual link diagrams are not equivalent. 

Let $(D,P)$ and $(D',P')$ be virtual link diagrams with cut points depicted in Figure~\ref{fgAlexapp1p} (i) and (ii). Then  $\varphi_3(D,P)$ and $\varphi_3(D',P')$ are as in the figure. 

\begin{figure}[h]
\centerline{
\includegraphics[width=10cm]{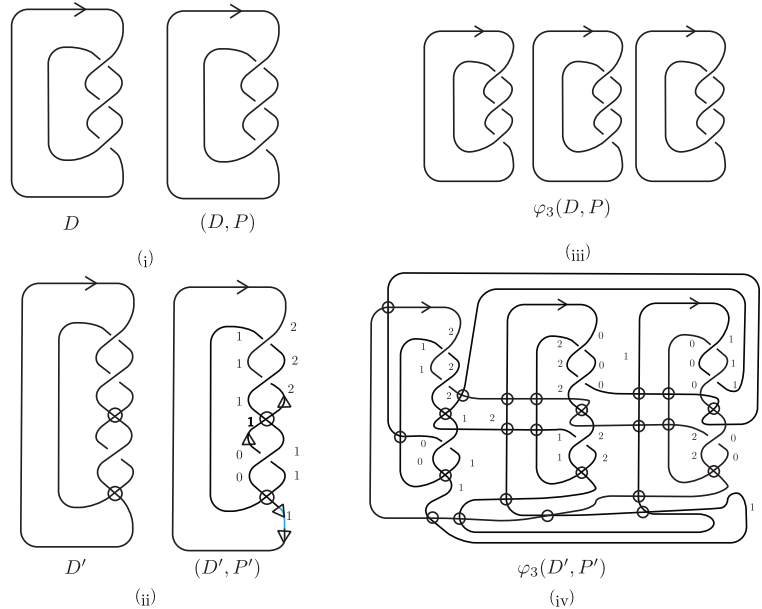}}
\caption{Example of mod 3 cyclic covering virtual link diagram}\label{fgAlexapp1p}
\end{figure}

It is easily seen that $\varphi_3(D,P)$ and $\varphi_3(D',P')$ are not equivalent, since any pair of components of $\varphi_3(D,P)$ have linking number $0$ and any pair of components of $\varphi_3(D',P')$ have linking number $1$.  By Theorem~\ref{thm2}, we conclude that $D$ and $D'$ are not equivalent.


Theorem~\ref{thm2} implies Theorem~\ref{thm:appli} below, which 
can be used to show that some virtual link diagrams are never equivalent to mod $m$ almost classical virtual link diagrams.  

\begin{lem} \label{lem:appli}
Let $D$ be a mod $m$ almost classical virtual link diagram. 
For any cut system $P$ of $D$,  $\varphi_m(D, P)$ is strongly equivalent to a virtual link diagram which is a disjoint union of $m$ copies of $D$. 
\end{lem}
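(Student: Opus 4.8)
The plan is to use Lemma~\ref{lem2} to replace $P$ by a cut system adapted to a mod $m$ Alexander numbering of $D$, and then to read off the splitting from the alternative construction $\varphi_m^0$ of Theorem~\ref{thm:General}.

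The first step is to reformulate the hypothesis. Being mod $m$ almost classical means $D$ has a mod $m$ Alexander numbering $\bar g$; I would use $\bar g$ to build a cut system $P_0$ of $D$ with the property that, for every semi-arc $a$ of $D$, the number of coherent cut points of $P_0$ on $a$ minus the number of incoherent ones is divisible by $m$. Concretely: choose at each classical crossing of $D$ an integer lift of its $\mathbb{Z}_m$-value, which equips the four arc-germs around that crossing with integers lifting the $\bar g$-values of the incident semi-arcs; along each non-loop semi-arc $a$ the two germ-values at its ends both reduce to $\bar g(a)$, hence differ by a multiple of $m$, so inserting that net number of coherent or incoherent cut points yields an integer Alexander numbering $f$ of $(D,P_0)$. (Loop semi-arcs receive an integer lift of their $\bar g$-value and no cut points.) By Lemma~\ref{lem:CutNumberOnArc} this $P_0$ has the stated divisibility property, equivalently $\sum_{p\in a}\epsilon(p)\equiv 0\pmod m$ for every semi-arc $a$, where $\epsilon(p)=\pm 1$ according as $p$ is coherent or incoherent. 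By Lemma~\ref{lem2} it suffices to prove the lemma for this $P_0$.

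The second step is to identify the components of $\varphi_m^0(D,P_0)$, which is strongly equivalent to $\varphi_m(D,P_0)$ by Theorem~\ref{thm:General}. Realize it from $m$ copies $D^0,\dots,D^{m-1}$ of $D$ together with virtual paths joining $p^k_-$ to $p^{k-\epsilon(p)}_+$. Each semi-arc $\alpha$ of $\varphi_m^0(D,P_0)$ lies over a single semi-arc $a$ of $D$ and is a concatenation, joined by those virtual paths, of copies of the pieces of $a$ at successive levels $k_0,\ k_0-\epsilon(p_1),\ \dots,\ k_0-\sum_{p\in a}\epsilon(p)$, where $p_1,\dots,p_s$ are the cut points on $a$. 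Put $H(\alpha):=k_0\in\mathbb{Z}_m$. By the first step the terminal level equals $k_0$ in $\mathbb{Z}_m$, so every semi-arc incident to a classical crossing $c^k$ of $\varphi_m^0(D,P_0)$ has $H=k$; since the joining virtual paths carry only virtual crossings, $H$ is constant on each connected component. Hence $\varphi_m^0(D,P_0)=\bigsqcup_{k\in\mathbb{Z}_m}\Delta_k$, where $\Delta_k$ is the union of the semi-arcs with $H=k$ and the crossings $c^k$.

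The last step is routine: every classical crossing of $\varphi_m^0(D,P_0)$ lies in a single $\Delta_k$, so the $\Delta_k$ meet one another only at virtual crossings and $\varphi_m^0(D,P_0)$ is strongly equivalent, by detour moves, to the disjoint union of the $\Delta_k$; and $\Delta_k$ has exactly one semi-arc over each semi-arc of $D$ and exactly one crossing $c^k$ over each crossing $c$ of $D$, with matching signs, orientations and incidences, so $\Delta_k$ is obtained from a copy of $D$ by replacing semi-arcs with virtual paths having the same endpoints and is therefore strongly equivalent to $D$. Hence $\varphi_m(D,P_0)\simeq\varphi_m^0(D,P_0)\simeq\bigsqcup_k\Delta_k\simeq\bigsqcup_k D$, and Lemma~\ref{lem2} extends this to an arbitrary cut system. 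The crux is the first step together with the well-definedness of $H$: it is exactly the vanishing mod $m$ of the net cut-point count along every semi-arc that keeps the $m$ lifted strands from being genuinely linked, so this is where, and the only place where, mod $m$ almost classicality is used; without it the level function is not single-valued and no splitting exists.
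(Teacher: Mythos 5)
Your proof is correct and follows essentially the same route as the paper: both build, from a mod $m$ Alexander numbering, a cut system whose net (coherent minus incoherent) cut-point count on every semi-arc is divisible by $m$, observe that the covering then splits into $m$ level-preserving copies of $D$ separable by detour moves, and finish with Lemma~\ref{lem2}. Your version is somewhat more carefully formalized (the explicit level function $H$ and the use of $\varphi_m^0$ replace the paper's appeal to a figure), but the underlying idea is identical.
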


\begin{proof}
There is a cut system $P_0$ of $D$ such that for each semi-arc of $D$, there are no cut points on it or there are $m$ coherent (or incoherent) cut points on it.  Each semi-arc of $D$ with $m$ coherent (or incoherent) cut points yields $m$ copies of such semi-arcs in the $m$ parallel copies of $D$, and $m$ virtual paths  in $\varphi_m(D, P_0)$ as in Figure~\ref{fgAlexcov3p}.  These $m$ virtual paths in $\varphi_m(D, P_0)$ can be 
 replaced with $m$ straight virtual paths by detour moves, and we obtain a disjoint union of $m$ copies of $D$.   This implies that  $\varphi_m(D, P_0)$ is strongly equivalent to the disjoint union of $m$ copies of $D$.  
 Thus $\varphi_m(D, P_0)$ is strongly equivalent to a disjoint union of $m$ copies of $D$.  
 By Lemma~\ref{lem2} (or Theorem~\ref{thm:General}), we see that $\varphi_m(D, P)$ is strongly equivalent to a  disjoint union of $m$ copies of $D$. 
\end{proof}

\begin{figure}[h]
\centerline{
\includegraphics[width=10.5cm]{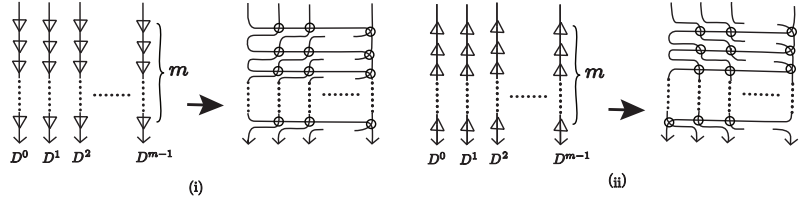}}
\caption{mod $m$ almost classical virtual link and its oriented cut points}\label{fgAlexcov3p}
\end{figure}
%


\begin{thm} \label{thm:appli}
If $\varphi_m(D,P)$ is not equivalent to a disjoint union of $m$ copies of $D$, then $D$ is never equivalent to a mod $m$ almost classical virtual link diagram.  
\end{thm}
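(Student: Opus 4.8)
The plan is to prove the contrapositive: assuming that $D$ is equivalent to some mod $m$ almost classical virtual link diagram, I will deduce that $\varphi_m(D,P)$ is necessarily equivalent to a disjoint union of $m$ copies of $D$, which is exactly the negation of the hypothesis of the theorem. Since everything needed is already in place, the argument is essentially a chaining of three facts established earlier.

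First I would fix the data. Suppose $D$ is equivalent to a mod $m$ almost classical virtual link diagram $D'$, and let $P'$ be any cut system of $D'$ (for instance the canonical one, which always exists). By Lemma~\ref{lem:appli}, $\varphi_m(D',P')$ is strongly equivalent, and hence equivalent, to a virtual link diagram $E'$ that is a disjoint union of $m$ copies of $D'$.

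Next I would bring in the main theorem. Since $D$ and $D'$ are equivalent, Theorem~\ref{thm2} gives that $\varphi_m(D,P)$ and $\varphi_m(D',P')$ are equivalent (this is also where one uses, implicitly, that the choice of cut system is immaterial, cf. Lemma~\ref{lem2}). Composing the equivalences, $\varphi_m(D,P)$ is equivalent to $E'$, a disjoint union of $m$ copies of $D'$. It then remains to observe that a disjoint union of $m$ copies of $D'$ is equivalent to a disjoint union of $m$ copies of $D$: a finite sequence of generalized Reidemeister moves carrying $D'$ to $D$ can be carried out simultaneously inside $m$ pairwise disjoint disks of $\mathbb{R}^2$, one for each copy. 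Hence $\varphi_m(D,P)$ is equivalent to a disjoint union of $m$ copies of $D$, contradicting the hypothesis, and the contrapositive is proved.

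I do not expect a real obstacle here; the statement is a formal consequence of Lemma~\ref{lem:appli}, Theorem~\ref{thm2}, and the elementary fact that equivalence of diagrams is preserved under taking disjoint unions. The only points warranting an explicit sentence are that last preservation fact (splitting equivalence into each component's disk) and the remark that, by Lemma~\ref{lem2}, the cut system $P$ on $D$ — and the cut system $P'$ chosen on $D'$ — play no role in the equivalence class of the cyclic covering diagram.
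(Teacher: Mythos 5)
Your argument is correct and is essentially the paper's own proof: invoke Lemma~\ref{lem:appli} for the mod $m$ almost classical diagram $D'$, apply Theorem~\ref{thm2} to identify $\varphi_m(D,P)$ with $\varphi_m(D',P')$ up to equivalence, and pass from $m$ copies of $D'$ to $m$ copies of $D$. You merely phrase it as a contrapositive rather than a contradiction and make explicit the (correct) step that disjoint unions of equivalent diagrams are equivalent, which the paper leaves implicit.
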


\begin{proof}
Suppose that $D$ is equivalent to a mod $m$ almost classical virtual link diagram $D'$.  
By Lemma~\ref{lem:appli}, $\varphi_m(D', P')$ is equivalent to a disjoint union of $m$ copies of $D'$. 
By Theorem~\ref{thm2},  $\varphi_m(D,P)$ and $\varphi_m(D', P')$ are equivalent. Thus, 
$\varphi_m(D, P)$ is equivalent to a disjoint union of $m$ copies of $D'$, and hence equivalent to a disjoint union of $m$ copies of $D$.  This contradicts the hypothesis. 
\end{proof}

Let $D'$ be the virtual link diagram depicted in Figure~\ref{fgAlexapp1p}.  For the cut system $P'$ in the figure, $\varphi_3(D',P')$ is not equivalent to a disjoint union of $D$, since a pair of its components have linking number $1$.  By Theorem~\ref{thm:appli}, we can conclude that $D'$ is never equivalent to a mod $3$ almost classical virtual link diagram.

\vspace{5pt}
\noindent
{\bf Acknowledgement}\\
The author  would like to thank Seiichi Kamada and Shin Satoh for their fruitful conversation.

\end{document}